\theoremstyle{plain}
\newtheorem{theorem}{Theorem}[section]
\newtheorem{prop}[theorem]{Proposition}
\newtheorem{lemma}[theorem]{Lemma}
\theoremstyle{definition}
\newtheorem{assumption}[theorem]{Assumption}
\theoremstyle{remark}
\newenvironment{smallpmatrix}
    {\left(
    \begin{smallmatrix}}
    {\end{smallmatrix}
    \right)
    }
    \def\r{{\mathbf{r}}}
\newcommand{\R}{\mathbb{R}}
\newcommand{\x}{\mathbf x}
\newcommand{\y}{\mathbf y}
\newcommand{\bv}{\mathbf v}
\newcommand{\s}{\mathbf s}
\newcommand{\g}{\mathbf g}
\newcommand{\bb}{\mathbf b}
\newcommand{\z}{\mathbf z}
\newcommand{\w}{\mathbf w}
\newcommand{\q}{\mathbf q}
\newcommand{\argmin}{\mathop{\rm argmin}}
\newcommand{\br}{\mathbf r}
\newcommand{\bu}{\mathbf u}
\newcommand{\bz}{{\bf 0}}
\newcommand{\cO}{{\mathcal O}}
\newcommand{\tO}{{\tilde{\mathcal O}}}
\newcommand{\cK}{\mathcal K}
\newcommand{\cX}{\mathcal X}
\newcommand{\grad}{\mathrm{grad} }
\newcommand{\Hess}{\mathrm{Hess} }
\newcommand{\T}{\mathrm{T} }
\icmltitlerunning{Solving SPG-LS via Spherically
Constrained Least Squares Reformulation}
\begin{document}
\twocolumn[
\icmltitle{Solving  Stackelberg Prediction Game with Least
Squares Loss via Spherically Constrained Least Squares Reformulation}

\begin{icmlauthorlist}
\icmlauthor{Jiali Wang}{fdu}
\icmlauthor{Wen Huang}{xmu}
\icmlauthor{Rujun Jiang}{fdu}
\icmlauthor{Xudong Li}{fdu}
\icmlauthor{Alex L. Wang}{cmu}
\end{icmlauthorlist}
\icmlaffiliation{fdu}{School of Data Science, Fudan University, China}
\icmlaffiliation{xmu}{School of Mathematical Sciences, Xiamen University, China}
\icmlaffiliation{cmu}{School of Computer Science, Carnegie Mellon University, USA}

\icmlcorrespondingauthor{Rujun Jiang}{rjjiang@fudan.edu.cn}

\icmlkeywords{Machine Learning, ICML}

\vskip 0.3in
]
\printAffiliationsAndNotice{} 

\begin{abstract}
The Stackelberg prediction game (SPG) is popular in characterizing strategic interactions between a learner and an attacker. As an important special case, the SPG with least squares loss (SPG-LS) has recently received much research attention. Although initially formulated as a difficult bi-level optimization problem, SPG-LS admits tractable reformulations which can be polynomially globally solved by semidefinite programming or second order cone programming. However, all the available approaches are not well-suited for handling large-scale datasets, especially those with huge numbers of features.  In this paper, we explore an alternative reformulation of the SPG-LS. By a novel nonlinear change of variables, we rewrite the SPG-LS  as a spherically constrained least squares (SCLS) problem. Theoretically, we show that an $\epsilon$ optimal solution to the SCLS (and the SPG-LS) can be achieved in $\tO(N/\sqrt{\epsilon})$ floating-point operations, where $N$ is the number of nonzero entries in the data matrix. Practically, we apply two well-known methods for solving this new reformulation, i.e., the Krylov subspace method and the Riemannian trust region method. Both algorithms are factorization free so that they are suitable for solving large scale problems. Numerical results on both synthetic and real-world datasets indicate that the SPG-LS, equipped with the SCLS reformulation, can be solved orders of magnitude faster than the state of the art.

\end{abstract}
\section{Introduction}
The big data era has led to an explosion in the availability of data from which to make decisions. It is thus indispensable to use machine learning techniques to gain deep insights from massive data. In practice, many classic data analytic approaches start by splitting available data into the training and test sets. Then, learning algorithms are fed with the training set and are expected to produce results which generalize well to the test set. However, this paradigm only works under the key implicit assumption that the available data in both training and test sets are independently and identically distributed, which, unfortunately, is not always the truth in practice. For example, in the context of email spam filtering, an attacker often adversarially generates spam emails based on his knowledge of the spam filter implemented by the email service provider \cite{bruckner2011stackelberg, zhou2019survey}. In addition to malicious attacks, sometimes the data providers may manipulate data for their own interests. For instance, health insurance policy holders may decide to modify self-reported data to reduce their premiums. On the other hand, the insurers (the ``defenders'' in this scenario) aim to select a good price model for the true data despite only seeing the modified data.

In fact, these scenarios can be modeled by the Stackelberg prediction game (SPG) \cite{bruckner2011stackelberg, shokri2012protecting, zhou2016modeling, wahab2016stackelberg, zhou2019survey,bishop2020optimal} which characterizes the interactions between two players, a learner (or, a leader) and a data provider (or, a follower). {In this setting, the learner makes the first move by selecting a learning model. Then the data provider, with full knowledge of the learner’s model, is allowed to modify its data. The learner's goal is to
minimize its own loss function under the assumption that the training data has been optimally modified from the data provider’s perspective.} From the above description, we see that the SPG model concerns two levels of optimization problems: The follower optimally manipulates its data and the leader makes its optimal decision taking into account the data manipulation. Formally, it is often formulated as a hierarchical mathematical problem or a bi-level optimization problem, which is generally NP-hard even in the  simplest case with linear constraints
and objectives~\cite{jeroslow1985polynomial}.

To overcome this issue, \citet{bishop2020optimal} take the first step to focus on a subclass of SPGs that can be reformulated as fractional programs. Specifically, they assume that all the loss functions of the leader and the follower are least squares, and that a quadratic regularizer is added to the follower's loss to penalise its manipulation of the data.
This assumption eventually turns the bi-level optimization problem into a single-level fractional optimization task which is proven to be polynomially globally solvable. Since no other assumption is made about the learner and data provider, this subclass of SPG, termed as the SPG-LS, is general enough to be applied in wide fields. However, the bisection algorithm proposed in \citet{bishop2020optimal} involves solving several tens of semidefinite programs (SDPs) which are computationally prohibitive in practice. Later, \citet{pmlr-v139-wang21d} improves over \citet{bishop2020optimal} by showing that the SPG-LS can be globally solved via solving only a single SDP with almost the same size as the ones in \citet{bishop2020optimal}. Furthermore, this single SDP can be reduced to a second order cone program (SOCP). It is shown in \citet{pmlr-v139-wang21d} that the SOCP approach for solving SPG-LS can be over 20,000+ times faster than the bisection method proposed in \citet{bishop2020optimal}. Yet, the SOCP method is still not well-suited for solving large-scale SPG-LS. Indeed, the spectral decomposition in the SOCP reformulation process is
time-consuming when the future dimension is high. This inevitably reduces the practical applicability of the SOCP approach for the SPG-LS.


In this paper, we present a novel reformulation to resolve the above mentioned issues of the SOCP method. Specifically, a nonlinear change of variables is proposed to reformulate the SPG-LS as a spherically constrained least squares (SCLS) problem. Then, we prove that an optimal solution to the SPG-LS can be recovered easily from any optimal solution to the SCLS under a mild assumption.
The SCLS can be seen as an equality constrained version of the trust region subproblem \cite{conn2000trust}, which admits a large amount of existing research on practical algorithms and theoretical complexity analysis.
Based on this, we show that an $\epsilon$ optimal solution\footnote{We say $\bar\x$ is an $\epsilon$ optimal solution for an optimization problem $\min_{\x\in \cX} f(\x)$, if $\bar\x\in\cX$ and $f(\bar\x)\le \min_{\x\in \cX} f(\x)+\epsilon$.} of the SCLS and thus SPG-LS can be solved in $\tO(N/\sqrt{\epsilon})$ flops, where $N$ denotes the number of nonzero entries of the data matrix and $\tO(\cdot)$ hides the logarithmic factors. This means there exits a linear time algorithm for finding an $\epsilon$ optimal solution of the SPG-LS.
Moreover, we demonstrate the empirical efficiency of our SCLS reformulation when matrix factorization free methods like the Krylov subspace method  \cite{gould1999solving,zhang2018nested} and the Riemannian trust region Newton (RTRNewton) method \cite{absil2007trust} are used as solvers.




We summarise our contributions as follows:
\begin{itemize}
    \item We derive an SCLS reformulation for the SPG-LS that avoids spectral decomposition steps (which are expensive when the involved data matrices are large). Moreover, we show that an optimal solution to the SPG-LS can be recovered from any optimal solution to the SCLS reformulation under a mild condition.

    \item Based on the reformulation, we show that an $\epsilon$ optimal solution for the SCLS can be found using $\tO(1/\sqrt{\epsilon})$ matrix vector products. In other words, an $\epsilon$ solution can be obtained in running time $\tO(N/\sqrt{\epsilon})$, where $N$ is the number of nonzeros in the data matrix. Moreover, we show that an $\epsilon$ optimal solution of SCLS can be used to recover an $\epsilon$ optimal solution for the original SPG-LS.

    \item Two practically efficient algorithms, which are factorization free, are adopted to solve the SCLS reformulation. We show that the SCLS approach significantly outperforms the
    SOCP approach with experiments on both real and synthetic data sets.
\end{itemize}
\section{Preliminaries}
In this section, we elaborate on the SPG-LS problem adopting the same terminology as in \citet{bishop2020optimal,pmlr-v139-wang21d}. To have a better understanding of our reformulation, a brief review of  methods in \citet{pmlr-v139-wang21d}, which is the  fastest existing method for solving the SPG-LS, will also be provided.

We assume that the learner has access to $m$ sample tuples $S = \{(\mathbf{x}_i, y_i, z_i)\}_{i=1}^m$, where $\mathbf{x}_i\in \mathbb{R}^n$ is input data with $n$ features, $y_i$ and ${z_i}$ are the true output label of $\mathbf{x}_i$ and the label that the data provider would like to achieve, respectively.
These samples are assumed to follow some fixed but unknown distribution $\mathcal{D}$.
The learner aims at training a linear predictor $\w \in \mathbb{R}^n$ to best estimate the true output label $y_i$ given the fake data.
Meanwhile, the data provider, with full knowledge of the learner’s predictive model $\w$, selects its own strategy (i.e., the modified data $\mathbf{\hat x}_i$) to make the corresponding prediction $\w^T\mathbf{\hat{x}}_i$ close to the desired label $z_i$.  Note that there is also a regularizer, $\gamma>0$, to control the deviation from the original data $\x_i$. This hyper-parameter adjusts the trade-off between data manipulation and closeness to the aimed target.

The problem can be modeled as a Stackelberg prediction game \cite{bruckner2011stackelberg,bishop2020optimal}. On the one hand, each data provider aims to minimize its own loss function with a regularizer that penalizes the manipulation of the data by solving the following optimization problem:
$$
\mathbf{x}_{i}^{*}=\underset{\hat{\mathbf{x}}_{\mathbf{i}}}{\operatorname{argmin}}\left\|\mathbf{w}^{T} \hat{\mathbf{x}}_{i}-z_{i}\right\|^{2}+\gamma\left\|\mathbf{x}_{i}-\hat{\mathbf{x}}_{i}\right\|_{2}^{2} \quad i \in[m],
$$
where $\w$ is the learner's model parameter that is known to the data provider.
On the other hand, the learner seeks to minimize the least squares loss with the modified data $\{\x_i^*\}_{i=1}^m$:
$$
\mathbf{w}^{*} \in \underset{\mathbf{w}}{\operatorname{argmin}}\sum_{i=1}^m\left \|\mathbf{w}^{T} \mathbf{x}_{i}^{*}-y_{i}\right\|^{2},
$$
To find the Stackelberg equilibrium of the two players, we focus on the following bi-level optimization problem
\begin{equation}
\label{pb:matrixform}\small
\begin{array}{ccl}
& \underset{\mathbf{w}}{\min}\ &\left\|X^{*} \mathrm{w}- \y\right\|^{2} \\
&\ \text{s.t.}\ &X^{*}=\underset{\hat{X}}{\argmin}\  \|\hat{X}{\mathbf{w}}-\mathbf{z}\|^2 +\gamma\|\hat{X}-X\|_{F}^{2},
\end{array}
\end{equation}
where the $i$-th row of ${X} \in\R^{m\times n}$ is the input sample $\x_i$ and the $i$-th entries of $\y, \z\in \R^m$ are labels $y_i$ and $z_i$, respectively.

In the following section, we have a quick review of single SDP and SOCP methods in \citet{pmlr-v139-wang21d}.
\subsection{SDP Reformulation}
By using the Sherman-Morrison formula \cite{sherman1950adjustment}, the SPG-LS can be rewritten as a quadratic fractional program~\cite{bishop2020optimal}
\begin{equation}
\label{pb:ori}
\inf_\w~~ \left\|\frac{\frac{1}{\gamma}\z\w^T\w+X\w}{1+\frac{1}{\gamma}\w^T\w}-\y\right\|^2.
\end{equation}
Introducing an augmented variable $\alpha=\w^T\w/\gamma$, we have the following quadratic fractional  programming (QFP) reformulation.
\begin{equation}
\label{pb:ori_with_alpha}
\begin{array}{lll}
\inf_{\w,\,\alpha}& v(\w,\alpha) \triangleq \left\|\frac{\alpha\z+X\w}{1+\alpha}-\y\right\|^2 \, \\
{\rm s.t.}&{\w}^T\w=\gamma\alpha.
\end{array}
\end{equation}

\begin{lemma}[Theorem 3.3 in \citet{pmlr-v139-wang21d}]
        Problem \eqref{pb:ori_with_alpha} is equivalent to the following SDP
        \begin{align}
        \label{pb:SDP}
        \begin{array}{lll}
        &\sup_{\mu,\lambda}& \mu\\
        &\rm s.t.&A-\mu B+\lambda C\succeq0,
        \end{array}
        \end{align}
where
$A=  \begin{smallpmatrix}
        X^T X& X^{T}(\mathbf{z}-\mathbf{y}) & -X^T \mathbf{y}\\
        (\mathbf{z}-\mathbf{y})^{T}X&\|\mathbf{z}-\mathbf{y}\|^2&-(\mathbf{z}-\mathbf{y})^T \mathbf{y}\\
        -\mathbf{y}^T X & -\mathbf{y}^T (\mathbf{z}-\mathbf{y})&\mathbf{y}^T \mathbf{y}\\
        \end{smallpmatrix},
\displaystyle B=
          \begin{smallpmatrix}
          \cO_{n}& & \\
           &1&1\\
           &1&1\\
          \end{smallpmatrix}\text{ and }
C =\begin{smallpmatrix}
          \frac{{I}_n}{\gamma}& &\\
          & 0 & -\frac{1}{2}\\
          & -\frac{1}{2}&0\\
          \end{smallpmatrix}.$
Here $\cO_n$ denotes a $n\times n$ matrix with all entries being zeros and $I_n$ denotes the $n\times n$ identity matrix.
\end{lemma}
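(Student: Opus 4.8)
The plan is to read \eqref{pb:ori_with_alpha} as a homogeneous quadratic fractional program subject to a single quadratic equality, and then to dualize it through the S-lemma to obtain \eqref{pb:SDP}. First I would homogenize. Stacking the decision variables into $\xi=(\w^T,\alpha,1)^T\in\R^{n+2}$ and setting $M=[\,X\ \ \z-\y\ \ -\y\,]\in\R^{m\times(n+2)}$ and $\b=(0,\dots,0,1,1)^T$, a direct computation identifies $A=M^TM$ and $B=\b\b^T$, so that
\[
\xi^TA\xi=\|X\w+\alpha(\z-\y)-\y\|^2,\qquad \xi^TB\xi=(1+\alpha)^2,\qquad \xi^TC\xi=\tfrac1\gamma\w^T\w-\alpha .
\]
Since $\frac{\alpha\z+X\w}{1+\alpha}-\y=\frac{X\w+\alpha(\z-\y)-\y}{1+\alpha}$, the objective is precisely $v(\w,\alpha)=\xi^TA\xi/\xi^TB\xi$ and the constraint $\w^T\w=\gamma\alpha$ is precisely $\xi^TC\xi=0$. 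Feasibility also forces $\alpha=\w^T\w/\gamma\ge0$, hence $\xi^TB\xi=(1+\alpha)^2\ge1>0$, so the denominator never vanishes.

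Next I would turn the fractional program into a parametric feasibility test. Because the ratio has a strictly positive denominator on the feasible set, the optimal value $v^*$ of \eqref{pb:ori_with_alpha} is the largest $\mu$ for which $\xi^T(A-\mu B)\xi\ge0$ holds at every feasible $\xi$. Since both $\xi^T(A-\mu B)\xi$ and $\xi^TC\xi$ are homogeneous of degree two, while the normalization (last coordinate equal to $1$) can be restored by scaling whenever the last coordinate is nonzero, this is equivalent (after a standard closedness argument for the directions with vanishing last coordinate) to
\[
v^*=\sup\{\mu:\ \xi^T(A-\mu B)\xi\ge0\ \text{ for all }\xi\text{ with }\xi^TC\xi=0\}.
\]

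Finally I would invoke the S-lemma for one quadratic equality. The matrix $C$ is indefinite: its leading block $I_n/\gamma$ is positive definite while the trailing block $\begin{smallpmatrix}0&-\frac12\\-\frac12&0\end{smallpmatrix}$ has eigenvalues $\pm\frac12$. Under this indefiniteness the implication ``$\xi^TC\xi=0\Rightarrow\xi^T(A-\mu B)\xi\ge0$'' is equivalent to the existence of a multiplier $\lambda$ with $A-\mu B+\lambda C\succeq0$; substituting this equivalence turns the display above into $v^*=\sup\{\mu:\exists\lambda,\ A-\mu B+\lambda C\succeq0\}$, which is exactly $\mu^*$, the value of \eqref{pb:SDP}. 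The routine direction is weak duality: if $A-\mu B+\lambda C\succeq0$ then $0\le\xi^T(A-\mu B+\lambda C)\xi=\xi^TA\xi-\mu\,\xi^TB\xi$ at any feasible $\xi$, giving $v(\w,\alpha)\ge\mu$ and hence $v^*\ge\mu^*$.

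\textbf{The main obstacle is the reverse inequality (strong duality), i.e.\ that the equality-constrained S-lemma is lossless here.} I would establish it through Dines' theorem, which guarantees that the joint range $\{(\xi^T(A-\mu B)\xi,\ \xi^TC\xi):\xi\in\R^{n+2}\}$ is a convex cone, followed by a separating-hyperplane argument that produces the multiplier $\lambda$; the indefiniteness of $C$ is exactly what excludes the degenerate separation and makes the argument go through. A secondary technical point to watch is the homogenization step, where one must confirm that no limiting direction with vanishing last coordinate drives the value below $v^*$, so that passing from the affine slice to the full cone $\{\xi^TC\xi=0\}$ preserves the optimal value.
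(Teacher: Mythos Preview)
The paper does not supply a proof of this lemma: it is stated verbatim as Theorem~3.3 of \citet{pmlr-v139-wang21d} and used as a black box, so there is no ``paper's own proof'' to compare against. Your proposal is therefore going beyond what the present paper does, and the route you outline---homogenize via $\xi=(\w^T,\alpha,1)^T$, identify $A=M^TM$, $B=\b\b^T$, $C$ with the constraint form, rewrite the fractional objective as $\xi^TA\xi/\xi^TB\xi$, pass to the parametric level-set test, and close with the equality-constrained S-lemma justified by the indefiniteness of $C$---is exactly the standard argument and is correct. Your two flagged subtleties are the right ones: the Dines-type convexity/separation argument for the exact S-lemma with an equality constraint, and the check that homogenizing does not introduce spurious directions. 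For the latter, note that the only directions with vanishing last coordinate satisfying $\xi^TC\xi=0$ are $\xi=(0,\alpha,0)$, on which $\xi^T(A-\mu B)\xi=\alpha^2(\|\z-\y\|^2-\mu)$; since feasible rays $(t\w,t^2\w^T\w/\gamma)$ drive $v$ to $\|\z-\y\|^2$ as $t\to\infty$, one always has $v^*\le\|\z-\y\|^2$, so these boundary directions do not lower the supremum in the SDP below $v^*$. With that observation your closedness argument goes through cleanly.
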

\subsection{SOCP Reformulation}
\citet{pmlr-v139-wang21d} further constructed an invertible matrix $V$ such that $A$, $B$ and $C$ are simultaneously congruent to arrow matrices via the change of variables associated to $V$, i.e.,
\begin{equation*}
\label{eq:formtilA}
\tilde A:= V^TAV =
\begin{pmatrix}
D & \bb\\
\bb^T&c\\
\end{pmatrix},
\end{equation*}
where $D = {\rm Diag}(d_1,\ldots,d_{n+1})\in\R^{(n+1)\times (n+1)}$, $\bb\in\R^{n+1}$ and $c\in\R$, and
\begin{equation*}
\label{eq:formtilBC}\tilde B:= VBV =\begin{smallpmatrix}
\cO_{n+1}&\\
&4
\end{smallpmatrix}{~ and ~} \tilde C= V^T CV= \begin{smallpmatrix}
\frac{1}{\gamma}I_{n+1}&\\
&-1
\end{smallpmatrix}.
\end{equation*}
Therefore the linear matrix inequality (LMI) constraint in \eqref{pb:SDP} is equivalent to
$\tilde A-\mu\tilde B+\lambda \tilde C\succeq 0.$
Using the generalized Schur complement, 
we further obtain an SOCP reformulation as follows.
\begin{lemma}[Theorem 4.1 in \citet{pmlr-v139-wang21d}]
With the same notation in this section, problem \eqref{pb:SDP} is  equivalent to the following SOCP problem
\begin{align}
\label{pb:SOCP}
    \begin{array}{ll}
         \sup_{\mu,\lambda,\s}& \mu\\
         \rm s.t.&d_i+\tfrac{\lambda}{\gamma}\ge0, ~ i\in [n+1], \\  &c-4\mu-\lambda-\sum_{i=1}^{n+1}s_i\ge0,\\   &s_i(d_i+\tfrac{\lambda}{\gamma})\ge~ b_i^2,~s_i\ge0,~i \in [n+1].
    \end{array}
\end{align}
\end{lemma}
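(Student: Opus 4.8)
The plan is to expand the linear matrix inequality $\tilde A - \mu \tilde B + \lambda \tilde C \succeq 0$ explicitly using the block forms recorded above and then apply the generalized Schur complement. Substituting the three matrices gives the \emph{arrow matrix}
\[
M(\mu,\lambda) := \tilde A - \mu \tilde B + \lambda \tilde C = \begin{pmatrix} D + \tfrac{\lambda}{\gamma} I_{n+1} & \bb \\ \bb^T & c - 4\mu - \lambda \end{pmatrix},
\]
whose leading $(n+1)\times(n+1)$ block $P := D + \tfrac{\lambda}{\gamma} I_{n+1}$ is diagonal with entries $d_i + \tfrac{\lambda}{\gamma}$. This diagonal structure is exactly what lets the single SDP decouple into the $n+1$ scalar second order cone pieces appearing in \eqref{pb:SOCP}.

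First I would invoke the generalized Schur complement criterion for a symmetric block matrix with leading block $P$: one has $M \succeq 0$ if and only if (i) $P \succeq 0$, (ii) $\bb \in \range(P)$, and (iii) the Schur complement $c - 4\mu - \lambda - \bb^T P^{\dagger} \bb \ge 0$, where $P^{\dagger}$ denotes the Moore--Penrose pseudoinverse. For our diagonal $P$, condition (i) reads $d_i + \tfrac{\lambda}{\gamma} \ge 0$ for every $i \in [n+1]$, condition (ii) forces $b_i = 0$ whenever $d_i + \tfrac{\lambda}{\gamma} = 0$, and the quadratic form in (iii) becomes $\bb^T P^{\dagger} \bb = \sum_{i : d_i + \lambda/\gamma > 0} b_i^2 / (d_i + \tfrac{\lambda}{\gamma})$.

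Next I would establish the two directions of the equivalence by viewing each $s_i$ as an upper bound for the $i$-th term of this sum. For the SOCP-to-SDP direction, given a feasible $(\mu,\lambda,\s)$, the rotated cone constraint $s_i(d_i + \tfrac{\lambda}{\gamma}) \ge b_i^2$ together with $s_i \ge 0$ yields $s_i \ge b_i^2 / (d_i + \tfrac{\lambda}{\gamma})$ on the positive indices and $b_i = 0$ on the degenerate ones, so $\bb^T P^{\dagger} \bb \le \sum_{i=1}^{n+1} s_i \le c - 4\mu - \lambda$, giving (iii). Conversely, given $M \succeq 0$, I would set $s_i = b_i^2 / (d_i + \tfrac{\lambda}{\gamma})$ on the positive indices and $s_i = 0$ elsewhere; the range condition (ii) makes this well defined, all rotated cone constraints then hold with equality, and (iii) supplies $\sum_{i=1}^{n+1} s_i \le c - 4\mu - \lambda$. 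Since the feasible sets project onto the same set of pairs $(\mu,\lambda)$ and both programs maximize $\mu$, their optimal values coincide.

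The hard part will be the careful treatment of the degenerate indices with $d_i + \tfrac{\lambda}{\gamma} = 0$, where the inverse must be replaced by the pseudoinverse and the range condition (ii) becomes active; here the rotated cone constraint degenerates precisely to $b_i = 0$ with $s_i$ free, matching the Schur complement condition exactly, so no separate argument is needed. The remaining verification, that each triple $(s_i,\, d_i + \tfrac{\lambda}{\gamma},\, b_i)$ genuinely encodes a rotated second order cone constraint, is routine.
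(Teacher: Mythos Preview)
Your proposal is correct and follows exactly the route the paper indicates: the paper does not spell out a proof but states that the SOCP reformulation is obtained ``using the generalized Schur complement'' applied to the arrow-structured LMI $\tilde A - \mu \tilde B + \lambda \tilde C \succeq 0$, and then cites Theorem~4.1 of \citet{pmlr-v139-wang21d} for the details. Your expansion of the block matrix, application of the Schur complement with pseudoinverse, and handling of the degenerate indices $d_i + \lambda/\gamma = 0$ via the range condition are all standard and match this approach.
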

To obtain the SOCP reformulation, we need a spectral decomposition to a matrix of order $(n+1)\times (n+1)$ \cite{pmlr-v139-wang21d}, which is expensive when the dimension is high and may lead to inaccurate solutions when the matrix is ill-conditioned.
To amend this issue, we obtain in the next section a factorization free method based on a novel reformulation  of problem \eqref{pb:ori}.
\section{Main Results}
\label{sec:3}
In this section, we show that using a nonlinear change of variables, we can rewrite \eqref{pb:ori_with_alpha} as a least squares problem over the unit sphere. This is the key observation of our paper.

Before presenting our main results, we first make a blanket assumption on the nonemptiness of the optimal solution set of \eqref{pb:ori}.
\begin{assumption}\label{asmp:bound}
Assume that the optimal solution set of  \eqref{pb:ori} (or equivalently, \eqref{pb:ori_with_alpha}) is nonempty.
\end{assumption}


Our main result is that under Assumption \ref{asmp:bound}, the QFP \eqref{pb:ori_with_alpha} can be reformulated as a spherical constrained least squares (SCLS) problem
\begin{equation}
\label{pb:scls}
\begin{array}{lll}
&\min\limits_{\tilde\w,\,\tilde\alpha}& {} {\tilde v}(\tilde \w,\tilde \alpha) \triangleq \left\|\frac{\tilde \alpha}{2} \z + \frac{\sqrt{\gamma}}{2}X\tilde \w - (\y - \frac{\z}{2})\right\|^2\\
&{\rm s.t.}& {} \tilde\w^T\tilde\w + \tilde \alpha^2 = 1.
\end{array}
\end{equation}
A formal statement is deferred to Theorem \ref{thm:main}.

Before presenting our main results, we first introduce two lemmas that show how, given a feasible solution in \eqref{pb:ori_with_alpha}, we can construct a feasible solution with the same objective value in \eqref{pb:scls} and vice versa (up to a minor achievability issue).

\begin{lemma}
\label{lem:tilde_w_tilde_alpha_construction}
Suppose $(\w,\alpha)$ is a feasible solution of \eqref{pb:ori_with_alpha}. Then $(\tilde\w,\tilde\alpha)$, defined as
\begin{equation}
\label{eq:tilde_w_tilde_alpha_construction}
\tilde \w \coloneqq \frac{2}{\sqrt{\gamma}(\alpha + 1)}\w
\quad\text{and}\quad
\tilde \alpha \coloneqq \frac{\alpha - 1}{\alpha + 1},
\end{equation}
is feasible to \eqref{pb:scls} and $v(\w,\alpha) = \tilde v(\tilde \w, \tilde \alpha)$.
\end{lemma}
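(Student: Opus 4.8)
The plan is to verify the two assertions directly from the explicit formulas in \eqref{eq:tilde_w_tilde_alpha_construction}, since the construction of $(\tilde\w,\tilde\alpha)$ is prescribed and no existence argument is needed. Before either computation I would record the preliminary observation that the constraint $\w^T\w = \gamma\alpha$ in \eqref{pb:ori_with_alpha}, together with $\gamma > 0$, forces $\alpha = \w^T\w/\gamma \ge 0$. Hence $\alpha + 1 > 0$, so the denominators $\sqrt{\gamma}(\alpha+1)$ and $\alpha+1$ appearing in $\tilde\w$ and $\tilde\alpha$ are strictly positive and both quantities are well defined; this also guarantees that the fraction defining $v(\w,\alpha)$ makes sense.

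First I would check feasibility for \eqref{pb:scls}, i.e. that $\tilde\w^T\tilde\w + \tilde\alpha^2 = 1$. Substituting the definitions and using $\w^T\w = \gamma\alpha$ gives $\tilde\w^T\tilde\w = \frac{4}{\gamma(\alpha+1)^2}\w^T\w = \frac{4\alpha}{(\alpha+1)^2}$, while $\tilde\alpha^2 = \frac{(\alpha-1)^2}{(\alpha+1)^2}$. Adding the two numerators yields the identity $4\alpha + (\alpha-1)^2 = (\alpha+1)^2$, so the sum collapses to $1$. This step is a one-line algebraic identity and presents no real difficulty.

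The second and main step is to show $\tilde v(\tilde\w,\tilde\alpha) = v(\w,\alpha)$, which I would carry out by simplifying the vector inside the norm defining $\tilde v$. Plugging in $\tilde\w$ cancels the $\sqrt{\gamma}$ and turns $\frac{\sqrt{\gamma}}{2}X\tilde\w$ into $\frac{1}{\alpha+1}X\w$; the shift $-(\y - \z/2)$ contributes a term $+\frac{1}{2}\z$ that I would combine with $\frac{\tilde\alpha}{2}\z = \frac{1}{2}\frac{\alpha-1}{\alpha+1}\z$. The crux of the argument is exactly this recombination of the two $\z$-terms: $\frac{1}{2}\bigl(\frac{\alpha-1}{\alpha+1} + 1\bigr)\z = \frac{\alpha}{\alpha+1}\z$, after which the whole expression inside the norm becomes $\frac{\alpha\z + X\w}{1+\alpha} - \y$, which is precisely the vector whose squared norm is $v(\w,\alpha)$. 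Equating the squared norms then finishes the proof. No step is genuinely hard; the only point requiring care is the bookkeeping in collecting the $\z$ coefficient, which is where the specific half-shift $\z/2$ built into $\tilde v$ is engineered to reproduce the fractional factor $\frac{\alpha}{1+\alpha}$.
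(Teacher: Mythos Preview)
Your proposal is correct and follows essentially the same approach as the paper's own proof: first observing that feasibility of $(\w,\alpha)$ forces $\alpha\ge 0$ so that the construction is well defined, then verifying the sphere constraint via the identity $4\alpha+(\alpha-1)^2=(\alpha+1)^2$, and finally checking the objective equality by direct substitution and recombination of the $\z$-terms. The paper's argument is line-for-line the same, just presented slightly more tersely.
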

\begin{proof}
We first note that $(\tilde \w,\tilde \alpha)$ are well-defined as $\alpha\geq 0$ by the feasibility of $(\w,\alpha)$ in \eqref{pb:ori_with_alpha}.
Next, we check feasibility of $(\tilde\w,\tilde\alpha)$ in \eqref{pb:scls}:
\begin{align*}
\tilde \w^T \tilde \w + \tilde \alpha^2 &= \frac{4}{\gamma(\alpha + 1)^2}\w^T\w + \frac{(\alpha - 1)^2}{(\alpha + 1)^2}\\
&=\frac{4\alpha + (\alpha - 1)^2}{(\alpha + 1)^2}~= 1.
\end{align*}
Here, the second equality follows from the fact that $\w^T\w = \gamma\alpha$.
Finally, we see that
\begin{align*}
  &\tilde v(\tilde\w,\tilde\alpha) \\
= &\left\|\frac{\tilde \alpha}{2} \z + \frac{\sqrt{\gamma}}{2}X\tilde \w - (\y - \frac{\z}{2})\right\|^2\\ =& \left\|\frac{\alpha -1}{2(\alpha + 1)} \z + \frac{\sqrt{\gamma}}{2}X\frac{2}{\sqrt{\gamma}(\alpha + 1)}\w - (\y - \frac{\z}{2})\right\|^2\\
=& \left\|\frac{\alpha\z + X\w}{\alpha + 1} - \y\right\|^2 = v(\w,\alpha).
\end{align*}
This completes the proof.
\end{proof}

\begin{lemma}
\label{lem:w_alpha_construction_neq_one}
Suppose $(\tilde \w, \tilde\alpha)$ is feasible to \eqref{pb:scls} with $\tilde \alpha \neq 1$. Then $(\w,\alpha)$, defined as
\begin{equation}
\label{eq:w_alpha_construction_neq_one}
\w \coloneqq \frac{\sqrt{\gamma}}{1-\tilde\alpha}\tilde\w
\quad\text{and}\quad
\alpha \coloneqq \frac{1+\tilde \alpha}{1-\tilde\alpha},
\end{equation}
is feasible to \eqref{pb:ori_with_alpha}  and $\tilde v (\tilde \w, \tilde \alpha) = v(\w ,\alpha)$.
\end{lemma}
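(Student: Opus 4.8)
The plan is to exploit the fact that the change of variables \eqref{eq:w_alpha_construction_neq_one} is precisely the inverse of the map \eqref{eq:tilde_w_tilde_alpha_construction} from Lemma \ref{lem:tilde_w_tilde_alpha_construction}, so that most of the work reduces to that lemma rather than being redone from scratch. The proof thus has two parts: checking feasibility of $(\w,\alpha)$ in \eqref{pb:ori_with_alpha}, and then transferring the objective-value equality back through Lemma \ref{lem:tilde_w_tilde_alpha_construction}.

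First I would verify that $(\w,\alpha)$ is well-defined and feasible. Well-definedness is immediate, since the hypothesis $\tilde\alpha\neq 1$ guarantees $1-\tilde\alpha\neq 0$. The only feasibility constraint in \eqref{pb:ori_with_alpha} is $\w^T\w=\gamma\alpha$, so using the sphere constraint $\tilde\w^T\tilde\w+\tilde\alpha^2=1$ I would substitute $\tilde\w^T\tilde\w=1-\tilde\alpha^2=(1-\tilde\alpha)(1+\tilde\alpha)$ into
$$\w^T\w=\frac{\gamma}{(1-\tilde\alpha)^2}\,\tilde\w^T\tilde\w,$$
which collapses to $\gamma(1+\tilde\alpha)/(1-\tilde\alpha)=\gamma\alpha$, as desired. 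One small point to flag is that \eqref{pb:ori_with_alpha} implicitly needs $\alpha\geq 0$ (recall $\alpha=\w^T\w/\gamma$ in its derivation); this is automatic, since the sphere constraint forces $\tilde\alpha^2\leq 1$, and combined with $\tilde\alpha\neq 1$ gives $\tilde\alpha\in[-1,1)$, whence $1-\tilde\alpha>0$ and $1+\tilde\alpha\geq 0$ so that $\alpha\geq 0$. (Equivalently, $\w^T\w=\gamma\alpha\geq 0$ with $\gamma>0$ already forces it.)

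For the objective-value equality I would verify that applying the map of Lemma \ref{lem:tilde_w_tilde_alpha_construction} to the now-feasible pair $(\w,\alpha)$ returns exactly $(\tilde\w,\tilde\alpha)$, i.e. that $\tilde\alpha=(\alpha-1)/(\alpha+1)$ and $\tilde\w=\frac{2}{\sqrt{\gamma}(\alpha+1)}\w$; both reduce to the identity $\alpha+1=2/(1-\tilde\alpha)$. Lemma \ref{lem:tilde_w_tilde_alpha_construction} then immediately yields $v(\w,\alpha)=\tilde v(\tilde\w,\tilde\alpha)$, closing the argument. Alternatively, one could substitute \eqref{eq:w_alpha_construction_neq_one} directly into $\tilde v$ and simplify, using $\frac{\sqrt{\gamma}}{2}X\tilde\w=\frac{X\w}{\alpha+1}$ and $\frac{\tilde\alpha}{2}\z+\frac{\z}{2}=\frac{\alpha\z}{\alpha+1}$ to reduce the argument inside $\tilde v$ to $(\alpha\z+X\w)/(\alpha+1)-\y$.

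I do not expect any serious obstacle here; the computation is routine algebra. The one genuinely delicate point is the exclusion of $\tilde\alpha=1$: this is exactly the ``minor achievability issue'' alluded to above, since the inverse map blows up there, reflecting that $\tilde\alpha=1$ is never produced by \eqref{eq:tilde_w_tilde_alpha_construction} from a finite feasible $(\w,\alpha)$ (it would require $\alpha\to\infty$). Tracking this exclusion and the nonnegativity of $\alpha$ is the only thing requiring care.
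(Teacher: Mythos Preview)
Your proposal is correct and follows essentially the same approach as the paper: the well-definedness and feasibility checks are identical to the paper's, and your alternative route for the objective-value equality (direct substitution showing the argument of $\tilde v$ collapses to $(\alpha\z+X\w)/(\alpha+1)-\y$) is exactly what the paper does. Your primary route for that step, invoking Lemma~\ref{lem:tilde_w_tilde_alpha_construction} after verifying the two maps are mutual inverses, is a harmless reordering of the same algebra.
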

\begin{proof}
We first note that $(\w,\alpha)$ are well-defined as $\tilde\alpha\neq 1$.
Next, we check feasibility of $(\w,\alpha)$ in \eqref{pb:ori}:
\begin{align*}
\w^T\w &= \frac{\gamma}{(1-\tilde\alpha)^2}\tilde\w^T\tilde\w = \frac{\gamma(1-\tilde\alpha^2)}{(1-\tilde\alpha)^2} = \frac{\gamma(1+\tilde\alpha)}{1-\tilde \alpha} =\gamma\alpha.
\end{align*}
Here, the second equality follows from the fact that $\tilde\w^T\tilde\w + \tilde\alpha^2 = 1$.
Finally, we check the objective value of $(\w,\alpha)$:
$\begin{array}{ll}
& \left\|\frac{\alpha\z + X\w}{1+\alpha}-\y\right\|^2 = \left\|\frac{(1+\tilde \alpha)\z + \sqrt{\gamma}X\tilde\w}{(1-\tilde \alpha)+(1+\tilde\alpha)}-\y\right\|^2 \\
=& \left\|\frac{\tilde \alpha}{2}\z + \frac{\sqrt{\gamma}}{2}X\tilde\w -(\y-\z/2)\right\|^2.
\end{array}$
\end{proof}
Let $v^*$ and $\tilde v^*$ be the optimal values of  \eqref{pb:ori_with_alpha} and \eqref{pb:scls}, respectively. Now we are ready to present our main results.
\begin{theorem}
\label{thm:main}
Given Assumption \ref{asmp:bound}, then there exists an optimal solution $(\tilde \w, \tilde\alpha)$ to \eqref{pb:scls} with $\tilde \alpha \neq 1$. Moreover, $(\w,\alpha)$, defined by \eqref{eq:w_alpha_construction_neq_one},
is an optimal solution to \eqref{pb:ori_with_alpha} and $v^* = v(\w,\alpha) = \tilde v(\tilde \w, \tilde \alpha) = \tilde v^*$.
\end{theorem}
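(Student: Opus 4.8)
The plan is to leverage the two preceding lemmas, which together set up a value-preserving correspondence between the feasible region of \eqref{pb:ori_with_alpha} and the part of the unit sphere in \eqref{pb:scls} on which $\tilde\alpha\neq 1$. I would first record the two easy ingredients that drive everything. Because the feasible set of \eqref{pb:scls} is the unit sphere, which is compact, and $\tilde v$ is continuous, the value $\tilde v^*$ is attained at some feasible point. Next, invoking Assumption \ref{asmp:bound}, I fix an optimal solution $(\w^*,\alpha^*)$ of \eqref{pb:ori_with_alpha}, so that $v(\w^*,\alpha^*)=v^*$, and push it through Lemma \ref{lem:tilde_w_tilde_alpha_construction}. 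The resulting $(\tilde\w^*,\tilde\alpha^*)$ is feasible for \eqref{pb:scls}, satisfies $\tilde v(\tilde\w^*,\tilde\alpha^*)=v^*$, and automatically obeys $\tilde\alpha^*=\tfrac{\alpha^*-1}{\alpha^*+1}\neq 1$. In particular this already gives $\tilde v^*\le v^*$.

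The crux is the single excluded point. The constraint $\tilde\w^T\tilde\w+\tilde\alpha^2=1$ forces $\tilde\alpha=1$ to occur only at $(\tilde\w,\tilde\alpha)=(\bz,1)$, where a direct substitution gives $\tilde v(\bz,1)=\|\z-\y\|^2$. I would argue that this value cannot undercut $v^*$, i.e. $v^*\le\|\z-\y\|^2$. To see this, take the feasible sequence $\alpha_k=k$ together with any $\w_k$ satisfying $\w_k^T\w_k=\gamma k$ (for instance $\w_k=\sqrt{\gamma k}\,\bu$ for a fixed unit vector $\bu$); each such pair is feasible in \eqref{pb:ori_with_alpha}. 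Since $\|X\w_k\|\le\|X\|_F\sqrt{\gamma k}$, the term $X\w_k/(1+\alpha_k)\to\bz$ while $\alpha_k\z/(1+\alpha_k)\to\z$, so $v(\w_k,\alpha_k)\to\|\z-\y\|^2$; taking the infimum yields $v^*\le\|\z-\y\|^2=\tilde v(\bz,1)$.

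With these pieces in hand I would conclude as follows. Let $(\tilde\w,\tilde\alpha)$ be any minimizer of \eqref{pb:scls}. If $\tilde\alpha\neq 1$, then Lemma \ref{lem:w_alpha_construction_neq_one} produces a feasible point of \eqref{pb:ori_with_alpha} with value $\tilde v^*$, whence $v^*\le\tilde v^*$; if instead $\tilde\alpha=1$, then $\tilde v^*=\tilde v(\bz,1)=\|\z-\y\|^2\ge v^*$ by the previous paragraph, so again $v^*\le\tilde v^*$. Combined with $\tilde v^*\le v^*$ from the first paragraph, this forces $v^*=\tilde v^*$. Consequently the point $(\tilde\w^*,\tilde\alpha^*)$ built above attains $\tilde v(\tilde\w^*,\tilde\alpha^*)=v^*=\tilde v^*$ and has $\tilde\alpha^*\neq 1$, hence is an optimal solution of \eqref{pb:scls} with $\tilde\alpha\neq 1$, giving the first assertion. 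Applying Lemma \ref{lem:w_alpha_construction_neq_one} to such an optimizer then returns, via \eqref{eq:w_alpha_construction_neq_one}, a feasible $(\w,\alpha)$ of \eqref{pb:ori_with_alpha} with $v(\w,\alpha)=\tilde v(\tilde\w,\tilde\alpha)=\tilde v^*=v^*$, which is therefore optimal, closing the chain of equalities.

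The only genuine obstacle is the ``point at infinity'' $\tilde\alpha=1$: the change of variables \eqref{eq:tilde_w_tilde_alpha_construction} sends $\alpha\to\infty$ to $\tilde\alpha\to 1$, so the spherical reformulation compactifies the unbounded feasible region of \eqref{pb:ori_with_alpha} by adjoining exactly the one boundary point $(\bz,1)$. Assumption \ref{asmp:bound} is precisely what prevents the degenerate scenario in which \eqref{pb:ori_with_alpha} has no attained optimum and $(\bz,1)$ becomes the unique minimizer of \eqref{pb:scls}; the limiting estimate $v^*\le\|\z-\y\|^2$ then guarantees that even if $(\bz,1)$ happens to be optimal for \eqref{pb:scls}, an optimizer with $\tilde\alpha\neq 1$ coexists, which is exactly what is needed to invoke Lemma \ref{lem:w_alpha_construction_neq_one}.
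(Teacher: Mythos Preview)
Your proof is correct and uses the same ingredients as the paper's: compactness of the sphere to guarantee a minimizer, the limiting feasible sequence in \eqref{pb:ori_with_alpha} to establish $v^*\le\|\z-\y\|^2=\tilde v(\bz,1)$, and Lemmas \ref{lem:tilde_w_tilde_alpha_construction} and \ref{lem:w_alpha_construction_neq_one} to pass values back and forth. The only structural difference is that the paper packages the limiting argument into a proof by contradiction (assuming $(\bz,1)$ is the \emph{unique} optimizer and deriving a violation of the optimality of $(\w^*,\alpha^*)$), whereas you establish $v^*\le\|\z-\y\|^2$ directly and then observe that the image $(\tilde\w^*,\tilde\alpha^*)$ of the assumed optimizer already attains $\tilde v^*$; this is a mild reorganization rather than a different approach.
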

\begin{proof}
Since the feasible region of  \eqref{pb:scls} is compact and $\tilde v$ is continuous, it follows from the well-known Weierstrass theorem that there exists at least one optimal solution to \eqref{pb:scls}.

%

Note that if $(\tilde \w, \tilde \alpha)$ with $\tilde \alpha=1$ is a  feasible solution  in \eqref{pb:scls}, we must have $\tilde \w=\bz$.
Now we claim that $(\bz, 1)$ cannot be the \emph{unique} optimal solution to \eqref{pb:scls}. Suppose on the contrary that $(\bz,1)$ is the unique optimal solution to \eqref{pb:scls}.
Let $(\w^*,\alpha^*)$ be any optimal solution to \eqref{pb:ori_with_alpha}.
Then, from Lemma \ref{lem:tilde_w_tilde_alpha_construction}, we have
\begin{equation}
\label{eq:f12}
v(\w^*,\alpha^*)= \tilde v (\tilde\w^*,\tilde\alpha^*),
\end{equation}
where $
\tilde \w^* \coloneqq \frac{2}{\sqrt{\gamma}(\alpha^* + 1)}\w^*,~
\tilde \alpha^* \coloneqq \frac{\alpha^* - 1}{\alpha^* + 1} < 1,
$
 and $(\tilde\w^*,\tilde\alpha^*)$ is feasible to \eqref{pb:scls}.
Since  $(\bz,1)$ is the unique optimal solution to \eqref{pb:scls}, it holds that
\begin{equation}
\label{eq:f2til}
\tilde v(\tilde\w^*,\tilde\alpha^*)>\tilde v(\bz,1) = \|\z-\y\|^2.
\end{equation}
On the other hand, for any $\w\neq 0$ and $t>0$, the pair $(t\w,t^2\w^T\w/\gamma)$  is clearly feasible to \eqref{pb:ori_with_alpha} with objective value
\begin{equation}
\label{eq:f2limit}
\begin{array}{ll}
&v(t\w,t^2\w^T\w/\gamma)\\
=&\left\|\frac{t^2 \w^T\w/\gamma}{1 + t^2 \w^T\w/\gamma}\z + \frac{t}{1 + t^2 \w^T\w/\gamma}X\w - \y\right\|^2\\
\to& \|\z-\y\|^2,\text{~as~}t\to \infty.
\end{array}
\end{equation}
Consequently, for sufficiently large $t$, we must have from \eqref{eq:f12}, \eqref{eq:f2til} and \eqref{eq:f2limit} that
$$v(\w^*,\alpha^*)>v(t\w,t^2\w^T\w/\gamma),$$
which contradicts the optimality of $(\w^*,\alpha^*)$ to \eqref{pb:ori_with_alpha}.

The above claim shows that there exists an optimal solution $(\tilde \w, \tilde\alpha)$ to \eqref{pb:scls} with $\tilde \alpha \neq 1$. Then, Lemma \ref{lem:w_alpha_construction_neq_one} yields $\tilde v(\tilde \w, \tilde \alpha) = \tilde v^* = v(\w, \alpha) \ge v^*$ with $(\w, \alpha)$ defined in \eqref{eq:w_alpha_construction_neq_one}.
Similarly, under Assumption \ref{asmp:bound},
from Lemma \ref{lem:tilde_w_tilde_alpha_construction}, we see that $v^*\ge  \tilde v^*$. Thus, $v^* = \tilde v^*$. The proof is completed.
\end{proof}
We remark that the other direction of above theorem also holds. That is, under Assumption \ref{asmp:bound}, there exists an optimal solution $(\w,\alpha)$ to \eqref{pb:ori_with_alpha}, and, furthermore, $(\tilde\w,\tilde\alpha)$, defined by \eqref{eq:tilde_w_tilde_alpha_construction}, is optimal to \eqref{pb:scls}.
We also remark that using the relationship \eqref{eq:w_alpha_construction_neq_one} and the equivalence of \eqref{pb:ori} and \eqref{pb:ori_with_alpha}, an $\epsilon$ optimal solution of the SCLS can be used to
recover an $\epsilon$ optimal solution of the SPG-LS.

Letting
$\hat{L} = \begin{pmatrix}\frac{\sqrt{\gamma}}{2}X&\frac{\z}{2}\end{pmatrix}\text{~and~} \br=\begin{smallpmatrix}
\tilde{\mathbf{w}}\\ \tilde{\alpha}
\end{smallpmatrix},$
we can rewrite problem \eqref{pb:scls} in a more compact form
\begin{equation}
\label{pb:rscls}
\min_{\br}~q(\br) \quad  {\rm s.t.}~\br^T\br = 1,
\end{equation}
where $q(\br)$ is a least squares
\begin{equation}
\label{eq:quad}
    q(\br) = \|\hat{L}\br-(\y-\z/2)\|_2^2 = \br^TH\br + 2\g^T\br + p
\end{equation}
with
$H  = \hat{L}^T\hat{L},~\g = \hat{L}^T(\z/2-\y),~ p = (\z/2-\y)^T(\z/2-\y).$

In the following of this paper, we focus on solving \eqref{pb:rscls}.
Problem \eqref{pb:rscls} is closed related to the well-known trust region subproblem (TRS), where the sphere constraint is replaced by a unit ball constraint.
There exist various methods for solving \eqref{pb:rscls} from the literature on TRS \cite{more1983computing,gould1999solving, conn2000trust,hazan2016linear,ho2017second,zhang2017generalized,zhang2018error,zhang2018nested,carmon2018analysis}, or the
generalized trust region subproblem (GTRS)  \cite{more1993generalizations,pong2014generalized,jiang2019novel,jiang2020linear,wang2020generalized,wang2021implicit}, which minimizes a (possible nonconvex) quadratic function over a (possible nonconvex)  quadratic inequality or equality constraint. Note that the TRS differs from the SCLS in the constraint, and the GTRS contains the SCLS as a special case.

\section{Complexity and Algorithms}
In this section, we first show that in theory there exists a linear time algorithm to find an $\epsilon$ optimal solution for the SPG-LS. After that, we introduce two practically efficient algorithms to solve \eqref{pb:rscls} (and thus recover a solution for the SPG-LS).

We point out that the linear time algorithms for the TRS \cite{hazan2016linear,ho2017second} can be adapted to design a linear time algorithm with complexity $\tO(N/\sqrt{\epsilon})$ for the SCLS to achieve an $\epsilon$ optimal solution, and the linear time algorithms for the GTRS \cite{jiang2020linear,wang2020generalized,wang2021implicit} indicate that the SCLS, as a special case of the GTRS, can also be solved in linear time $\tO(N/\sqrt{\epsilon})$.
Here $N$ denotes the number of nonzero entries in the data matrix, and the logarithm in the runtime comes from the probability of success in Lanczos type methods for finding the smallest eigenvalue of a matrix. Once we obtain a solution $\tilde\r$ such that
$\|\tilde\r\|=1$ and $q(\tilde\r) \le q(\r^*) + \epsilon$, where $\r^*$ is an optimal solution of \eqref{pb:rscls}, we can set $\begin{pmatrix}
\tilde{\mathbf{w}}\\ \tilde{\alpha}
\end{pmatrix}=\tilde\br$ and
$(\w,\alpha)$ as in $\eqref{eq:w_alpha_construction_neq_one}$.
Then $\w$ is an $\epsilon$ optimal solution to \eqref{pb:ori} because $v(\w,\alpha)=\tilde v(\tilde\w,\tilde\alpha)=q(\r)$ and $q(\r^*)=v^*$ for the same reasoning .
Thus, one can obtain an $\epsilon$ optimal solution to SPG-LS in runtime
$\tO\left(N/\sqrt{\epsilon}\right)$ as the main cost is in solving the SCLS \eqref{pb:rscls}.

However, in practice the computation of even an approximate minimum eigenvalue may be expensive. Instead, we will introduce two highly efficient algorithms to solve \eqref{pb:scls} without computing
approximate eigenvalues.
One is the Krylov subspace method (adapted to the spherically constrained case) proposed in \citet{gould1999solving},
and the other is the Riemannian trust region Newton (RTRNewton) method proposed in \citet{absil2007trust}.
\subsection{The Krylov Subspace Method}
\label{sec:ksm}
The simplest idea of the Krylov subspace method (Section 5 in \citet{gould1999solving}) solves a sequence of smaller dimensional problems in the same form of  \eqref{pb:rscls}. Specifically, define  $(k+1)$st Krylov subspace
\[
\cK_k:=\{\g,H\g,H^2\g,\ldots,H^k\g\}.
\]
Let $Q_k=[\q_0,\q_1,\ldots,\q_k]\in\R^{(n+1)\times (k+1)}$ be an orthonormal basis produced by the generalized Lanczos process.
Then assuming $\dim \cK_k=k+1$, we have that $Q_k^THQ_k$ is a tridiagonal matrix with $Q_k^TQ_k=I_{k+1}$.
Each iteration of the Krylov subspace method solves the following subproblem (adapted to the spherical constrained case)
\begin{equation}
\label{pb:subsp}
\min_{r\in \cK_k,\|\br\|= 1} \br^TH\br + 2\g^T\br + p.
\end{equation}
\citet{gould1999solving} proved that the above subproblem can be solved efficiently in $\cO(k)$ flops, if we use a safeguarded Newton's method, where the most expensive cost is $k$ matrix-vector products for $H^t\g$, with $t\in[k]$. We remark that though \citet{gould1999solving} considered the case  $\|\br\|\le 1$, the two cases $\|\br\|\le 1$ and $\|\br\|=1$ are essentially the same if the inequality in the TRS is active, which occur if $\|(H-\lambda_{\min}I)^\dagger\g\|>1$\footnote{This can be easily derived from the proof of Proposition \ref{prop:expk}.}.
Here $(\cdot)^\dagger$ denotes the Moore-Penrose pseudoinverse of a matrix $(\cdot)$.

To achieve better practical performance, \citet{gould1999solving} proposed the generalized Lanczos trust-region (GLTR) method,  which is an efficient implementation of the above Krylov subspace method. Based on an efficient nested restarting strategy, \citet{zhang2018nested} further proposed a nested Lanczos method for TRS (LTRSR), which is an improvement for GLTR.

The convergence behavior of the Krylov subspace method is also well analyzed in the literature.
The optimality condition of problem \eqref{pb:rscls}  is characterized as follows (adapted from Chapter 7 in \citet{conn2000trust})
\begin{equation}
\label{eq:opt}
\begin{array}{rl}
(H + \lambda^* I)\br^* &= -\g,\\
H + \lambda^* I&\succeq0,\\
\quad \|\br^*\| &= 1,
\end{array}
\end{equation}
where $\lambda^*$ is the corresponding Lagrangian multiplier.
It is shown that there always exists an optimal solution $\br^*$ and a unique Lagrangian multiplier $\lambda^*$, because different $\lambda^*$s yield different values for $\|\r^*\|$, contradicting $\|\r^*\|=1$. 

Define
\begin{equation}
\label{eq:kappa}
\kappa=\frac{\lambda_{\max}+\lambda^*}{\lambda_{\min}+\lambda^*},
\end{equation}
for $\lambda^*\ge-\lambda_{\min}$ and use the convention $\frac{1}{0}=\infty$. Here $\kappa$ is regarded as the condition number of \eqref{pb:rscls} \cite{carmon2018analysis}.
\citet{zhang2018nested} and \citet{carmon2018analysis} demonstrated that when $\kappa<\infty$, the Krylov subspace method satisfies
\[
f(\br_k)-f(\br^*)\le \cO(\exp(-k/\sqrt\kappa)),
\]
where $\br_k$ is an optimal solution of \eqref{pb:subsp}, and $\br^*$ is an optimal solution for \eqref{pb:rscls}.
\citet{carmon2018analysis} further proved that for all cases including $\lambda^*=-\lambda_{\min}$, a variant of the Krylov subspace method  where $\g$ is perturbed with random vector will output a solution $\r_k$ satisfies
\[
f(\br_k)-f(\br^*)\le \tO(1/k^2).
\]
for the SCLS (adapted from their analysis for the TRS).
This indeed gives another $\tO(N/\sqrt\epsilon)$ time algorithm for solving the SPG-LS up to $\epsilon$ tolerance; see also \citet{wang2021implicit} for extensions of this idea to the GTRS.

Next we relate the existing convergence results with problem \eqref{pb:rscls}.
\begin{prop}
\label{prop:expk}
If $\|(H-\lambda_{\min} I)^{\dagger}\g\|>1$, we must have $\lambda^*>-\lambda_{\min}$ and thus
\[
f(\br_k)-f(\br^*)\le \cO(\exp(-k/\sqrt\kappa)),
\]
for $\kappa$ defined in \eqref{eq:kappa}.
\end{prop}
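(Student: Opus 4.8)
The plan is to establish the first implication by contraposition and then read off the exponential rate from the convergence result recalled above. The point is that the stated bound is known to hold whenever the condition number $\kappa$ is finite, and $\kappa<\infty$ is exactly the assertion that $\lambda_{\min}+\lambda^*>0$, i.e.\ $\lambda^*>-\lambda_{\min}$. So it suffices to rule out the \emph{hard case} $\lambda^*=-\lambda_{\min}$ under the hypothesis $\|(H-\lambda_{\min}I)^\dagger\g\|>1$.

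First I would suppose, for contradiction, that $\lambda^*=-\lambda_{\min}$. The stationarity condition in \eqref{eq:opt} then becomes $(H-\lambda_{\min}I)\br^*=-\g$; in particular this forces $\g\in\range(H-\lambda_{\min}I)$, so the system is consistent and its minimum-norm solution is $-(H-\lambda_{\min}I)^\dagger\g$. Next I would decompose $\br^*$ orthogonally with respect to the symmetric positive semidefinite matrix $H-\lambda_{\min}I$, writing $\br^*=\br_1+\br_2$ with $\br_1\in\range(H-\lambda_{\min}I)$ and $\br_2\in\ker(H-\lambda_{\min}I)$. Applying $(H-\lambda_{\min}I)^\dagger$ to $(H-\lambda_{\min}I)\br^*=-\g$ and using that $(H-\lambda_{\min}I)^\dagger(H-\lambda_{\min}I)$ is the orthogonal projector onto $\range(H-\lambda_{\min}I)$ identifies $\br_1=-(H-\lambda_{\min}I)^\dagger\g$. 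Since $\br_1\perp\br_2$, the Pythagorean identity yields
\[
1=\|\br^*\|^2=\|\br_1\|^2+\|\br_2\|^2\ge\|(H-\lambda_{\min}I)^\dagger\g\|^2,
\]
which contradicts the hypothesis $\|(H-\lambda_{\min}I)^\dagger\g\|>1$. Hence $\lambda^*=-\lambda_{\min}$ is impossible, and since \eqref{eq:opt} always guarantees $H+\lambda^*I\succeq0$, i.e.\ $\lambda^*\ge-\lambda_{\min}$, we conclude $\lambda^*>-\lambda_{\min}$.

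With $\lambda^*>-\lambda_{\min}$ in hand, we have $\lambda_{\min}+\lambda^*>0$, so $\kappa$ in \eqref{eq:kappa} is finite, and the $\cO(\exp(-k/\sqrt\kappa))$ bound stated above applies directly, completing the argument. I expect the only delicate step to be the pseudoinverse bookkeeping in the second paragraph: one must check that the range component of $\br^*$ is \emph{exactly} $-(H-\lambda_{\min}I)^\dagger\g$ and that it is orthogonal to the kernel component, since this orthogonal split is what turns the constraint $\|\br^*\|=1$ into the norm inequality that contradicts the hypothesis. Everything else is a direct appeal to \eqref{eq:opt} and the previously quoted convergence result.
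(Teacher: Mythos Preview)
Your proof is correct and follows the same contradiction argument as the paper: assume $\lambda^*=-\lambda_{\min}$, use the stationarity equation in \eqref{eq:opt} together with the hypothesis to force $\|\br^*\|>1$, contradicting $\|\br^*\|=1$. Your version is in fact a bit more careful than the paper's, which writes $\br^*=-(H+\lambda^*I)^\dagger\g$ directly without discussing the possible kernel component; your orthogonal decomposition and Pythagorean step make explicit why the norm inequality still goes through even when $\br^*$ has a nonzero component in $\ker(H-\lambda_{\min}I)$.
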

\begin{proof}
Note that if $\lambda^*=-\lambda_{\min}$, then the first equation in \eqref{eq:opt} implies that
$\r^*=-(H + \lambda^* I)^\dagger\g$ and thus the assumption in the proposition implies
$\|\r^*\|>1$. However, this violates the constraint $\|\r^*\|=1$.
Therefore we must have $\lambda^*>-\lambda_{\min}$ and thus $\kappa<\infty$.
\end{proof}
In fact, we checked the data in the experiments in Section \ref{sec:5} and found that $\|\bar\br\|>1$ always holds in real-world datasets and our synthetic datasets. 

\subsection{The Riemannian Trust Region Newton (RTRNewton) Method}

The feasible set in Problem~\eqref{pb:rscls} forms a unit sphere $\mathcal{S}^{n} = \{\br \in \mathbb{R}^{n + 1} : \br^T \br = 1\}$. When $\mathcal{S}^{n}$ is endowed with the Euclidean metric $\langle \bv, \bu \rangle = \bv^T \bu$, the unit sphere is a Riemannian manifold~\cite{AMS2008}. Therefore, the RTRNewton method proposed in~\citet{absil2007trust} can be used. The RTRNewton method for Problem~\eqref{pb:scls} is summarized in Algorithm~\ref{alg:RTRNewton}.

\begin{algorithm}[ht!]
\caption{A Riemannian Trust Region Newton Method}
\label{alg:RTRNewton}
\begin{algorithmic}[1]
\REQUIRE Initial iterate $\br_0$, real numbers
$\overline{\Delta} > 0$, $\Delta_0 \in (0, \overline{\Delta})$, 
$c \in (0, 0.25)$, $\tau_1 \in (0, 1)$, and $\tau_2 > 1$;
\FOR{$k=0,1,2,\ldots$}
\STATE Obtain $s_k \in \mathbb{R}^d$ by (approximately) solving
\begin{align}
\label{eq:subrtr}
\s_k \approx \argmin_{\|\s\|_2 \leq \Delta_k} m_k(\s),
\end{align}
where $m_k(\s) = q(\br_k) + \s^T \grad \, q(\br_k) + \frac{1}{2} \s^T \Hess \; q(\br_k) [\s]$, $\grad \; q$ denotes the Riemannian gradient of~$q$ in~\eqref{Rgrad}, and $\Hess \; q$ denotes the Riemannian Hessian of~$q$ in~\eqref{RHess}; \label{RTRNewton:st1}
\STATE Set
$\rho_k \gets \frac{q(\br_k) - q(R_{\br_k}(\s_k))}{m_k(0) - m_k(\s_k)}$, where $R$ denotes the retraction in~\eqref{Retr};
\IF {$\rho_k > c$}
\STATE $\br_{k + 1} \gets R_{\br_k}(\s_k)$;
\ELSE
\STATE $\br_{k + 1} \gets \br_k$;
\ENDIF
\IF {$\rho_k > \frac{3}{4}$}
\STATE If $\|\s_k\| \geq 0.8 \Delta_k$, then $\Delta_{k + 1} \gets \min(\tau_2 \Delta_k, \overline{\Delta})$; otherwise $\Delta_{k+1} \gets \Delta_k$;
\ELSIF {$\rho_k < 0.1$}
\STATE $\Delta_{k + 1} \gets \tau_1 \Delta_k$;
\ELSE
\STATE $\Delta_{k + 1} \gets \Delta_k$;
\ENDIF
\ENDFOR
\end{algorithmic}
\end{algorithm}

Algorithm~\ref{alg:RTRNewton} relies on the notion of Riemannian gradient, Riemannian Hessian, and retraction. We refer to~\citet{AMS2008} for their rigorous definitions.
Here, we give the Riemannian gradient, the Riemannian Hessian for Problem~\eqref{pb:rscls} and the used retraction.

The Riemannian gradient of~$q$ is given by
\begin{equation} \label{Rgrad}
\grad \; q(\br) = P_{\T_{\br} \mathcal{S}^n} \nabla q(\br) = (I - \br \br^T) (2 H \br + 2 \g),
\end{equation}
where $P_{\T_{\br} \mathcal{S}^n}$ denotes the orthogonal projection onto the tangent space at $\br$ with $\T_{\br} \mathcal{S}^n = \{\s : \s^T \br = 0 \}$, and $\nabla q(\br)$ denotes the Euclidean gradient of~$q$, i.e., $\nabla q(\br) = 2 H \br + 2 \g$. The action of the Riemannian Hessian of~$q$ at $\br$ along $\bv \in\T_{\br} \mathcal{S}^n$ is given by
\begin{align}
\Hess \; q(\br) [\bv] =& P_{\T_{\br} \mathcal{S}^n} ( \nabla^2 q(\br) \bv - \bv \br^T \nabla q(\br) ), \nonumber \\
=& (I - \br \br^T) (2 H \bv - 2 \bv \br^T (H \br + \g)), \label{RHess}
\end{align}
where $\nabla^2 q(\br)=2H$ denotes the Euclidean Hessian of $q$ at $\br$. The retraction $R$ that we use is given by
\begin{equation} \label{Retr}
R_{\br}(\bv) = \frac{ \br + \bv }{\|\br + \bv\|},
\end{equation}
where $\br \in \mathcal{S}^n$ and $\bv \in \T_{\br} \mathcal{S}^n$.

The subproblem \eqref{eq:subrtr} is approximately solved by the truncated conjugate gradient method. We use the implementations in ROPTLIB~\cite{huang2018roptlib}.


The global convergence and local superlinear convergence rate of RTRNewton
have been established by Theorem~7.4.4 and Theorem~7.4.11 of~\citet{AMS2008}.
We state the results in the theorem below.
\begin{theorem}\label{thm:RTRN}
Let $\{\br_k\}$ be a sequence of iterates generated by Algorithm~1. It follows that
\[
\lim_{k \rightarrow \infty} \grad\; q(\br_k) = 0.
\]
Suppose $\br^*$ is a nondegenerate local minimizer of $q$, i.e., $\grad\; q(\br^*) = 0$ and
$\Hess\; q(\br^*)$ is positive definite. Then there exists $c > 0$ such that for all sequence
$\{\br_k\}$ generated by Algorithm~1 converging to $\br^*$, there exists $K>0$ such that
for all $k > K$,
\begin{equation} \label{rtrquadratic}
\mathrm{dist}(\br_{k+1}, \br^*) \leq c~\mathrm{dist}(\br_k, \br^*)^2.
\end{equation}
\end{theorem}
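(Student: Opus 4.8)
The plan is to verify that our concrete setting---the cost $q$ over the unit sphere $\mathcal{S}^n$ with the retraction \eqref{Retr} and the exact Riemannian gradient and Hessian \eqref{Rgrad}--\eqref{RHess}---satisfies the hypotheses of the general Riemannian trust-region convergence theorems (Theorem~7.4.4 and Theorem~7.4.11 of \citet{AMS2008}), and then invoke those theorems directly. The structural facts that make the verification routine are that $\mathcal{S}^n$ is a compact smooth Riemannian manifold, that $q$ is a polynomial and hence $C^\infty$, and that the normalization retraction \eqref{Retr} is smooth (indeed a second-order retraction) on $\mathcal{S}^n$.

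For the global convergence claim $\lim_{k\to\infty}\grad\,q(\br_k)=0$, I would check the hypotheses of Theorem~7.4.4. First, $q$ is bounded below since $q(\br)=\|\hat L\br-(\y-\z/2)\|_2^2\ge 0$. Second, the pullback $q\circ R$ is radially Lipschitz continuously differentiable; this holds automatically because $q$ and $R$ are smooth and $\mathcal{S}^n$ is compact, so all relevant derivatives are uniformly bounded over the manifold. Third, the model Hessian is uniformly bounded, i.e. $\|\Hess\,q(\br_k)\|\le\beta$ for some $\beta>0$, again by continuity of \eqref{RHess} on the compact sphere. Fourth, the truncated conjugate gradient solver used for the subproblem \eqref{eq:subrtr} produces an $\s_k$ satisfying at least a Cauchy decrease, since its initial step follows the steepest-descent direction. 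With these conditions in hand, Theorem~7.4.4 yields the stated global convergence.

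For the local quadratic rate I would appeal to Theorem~7.4.11. The two essential ingredients are that the model in \eqref{eq:subrtr} uses the \emph{exact} Riemannian Hessian \eqref{RHess}, and that $\br^*$ is nondegenerate, i.e. $\grad\,q(\br^*)=0$ and $\Hess\,q(\br^*)\succ 0$, which is precisely the hypothesis of the theorem. Together with the smoothness of the retraction and a truncated-CG stopping tolerance of the form $\|\grad\,m_k(\s_k)\|\le \|\grad\,q(\br_k)\|^{2}$ (the $\theta=1$ regime), Theorem~7.4.11 delivers $\mathrm{dist}(\br_{k+1},\br^*)\le c\,\mathrm{dist}(\br_k,\br^*)^2$ for all $k$ beyond some $K$, which is exactly \eqref{rtrquadratic}.

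I expect no genuine obstacle beyond bookkeeping: every regularity condition required by the abstract theorems is supplied for free by the compactness of $\mathcal{S}^n$ and the smoothness of the polynomial objective and the normalization retraction. The only point demanding a little care is confirming that the subproblem solver meets the Cauchy-decrease requirement (for global convergence) and the inner-iteration accuracy requirement (for the quadratic rate); both are standard properties of truncated conjugate gradients and are guaranteed by the ROPTLIB implementation used here.
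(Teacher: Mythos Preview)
Your proposal is correct and follows essentially the same route as the paper's proof: verify the hypotheses of Theorems~7.4.4 and~7.4.11 in \citet{AMS2008} using compactness of $\mathcal{S}^n$, smoothness of the polynomial $q$, the second-order retraction~\eqref{Retr}, and the Cauchy-decrease property of the truncated CG solver with $\theta=1$. The paper additionally cites Proposition~7.4.5 and Corollary~7.4.6 of \citet{AMS2008} to justify the radial Lipschitz condition from compactness, and singles out that the second-order retraction gives condition~(7.36) with $\beta_{\mathcal{H}}=0$, but these are exactly the bookkeeping points you anticipated.
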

The proof for the theorem is deferred to \cref{ap:RTRN}.
The global convergence rate has also been established where the iteration complexity is $\mathcal{O}\left(\epsilon_{g}^{-2}\right)$ for $\|\operatorname{grad}\; q(x)\| \leq \epsilon_{g}$. We refer interested readers to Theorem~3.9 of~\citet{boumal2019global}.


\subsection{Time complexity comparisons}
In this section, we give a theoretical worst case time complexity of different methods for solving the SPG-LS. First we point out that the RTRNewton cannot be guaranteed to converge to the \emph{global} minimum of SPG-LS. In the worst case, the RTRNewton needs to solve $\mathcal{O}\left(\epsilon_{g}^{-2}\right)$ many trust region subproblems. This means the time complexity is much worse than the Krylov subspace method \cite{carmon2018analysis} studied in Section \ref{sec:ksm}.

Next we compare the time complexity of the Krylov subspace method and the SOCP method.
In the case of dealing with a sparse data matrix, the time complexity of the Krylov subspace method is $\tO\left(N/\sqrt{\epsilon}\right)$, where $N$ is the number of nonzero entries in the data matrix $X$. Here, we use the fact that the cost of the matrix-vector product in the Krylov subspace method is $\cO(N)$ as we can compute $H\r$ by $\hat L^T(\hat L\r)$ for any given $\r \in \mathbb{R}^n$.
If $\kappa<\infty$, then the complexity can be further improved to $\cO(N\log(1/\epsilon))$.
In the dense case with $m = {\cal O}(n)$, the complexity is $\tO\left(N/\sqrt{\epsilon}\right)$ and can be improved to ${\cal O}(n^2 \log(1/\epsilon)$ if $\kappa < \infty$.
Next we consider the time complexity for the SOCP method, which consists of the time complexity of formulating the matrix $A$, the spectral decomposition and  the IPM  for solving the SOCP. Since the spectral decomposition and the IPM can not benefit much from the data sparsity, we do not distinguish the sparse and dense cases for the SOCP method. Particularly, the cost of  formulating the matrix $A$ is lower bounded by $\cO(N)$ and upper bounded by $\cO(n^2)$ and the spectral decomposition takes $O\left(n^{\omega}\right)$ flops for some $\omega$ satisfying $2<\omega<3$ \cite{demmel2007fast}. Meanwhile, the iteration complexity for solving the $\mathrm{SOCP}$ reformulation is $\cO(\sqrt{n} \log (1 / \epsilon))$ according to \citet{monteiro2000polynomial}. As per iteration in cost in the IPM is $\cO(n)$, the total cost of the IPM is $\cO\left(n^{\frac{3}{2}} \log (1 / \epsilon)\right)$. Therefore the worst case complexity of the SOCP method is $\cO\left(n^{w}+n^{\frac{3}{2}} \log (1 / \epsilon)\right).$

Theortically, it is hard to compare the Krylov subspace method and the SOCP method as the result depends on $\kappa$, $N$, $w$ and $\epsilon$.
In practice, it usually holds that $\kappa<\infty$ and the spectral decomposition step in the SOCP methods usually costs $\cO(n^3)$. In fact, our experiments show that the spectral decomposition step often needs more time than the IPM for the SOCP. Thus, the Krylov subspace method, which can effectively utilize the data sparsity, is much faster than the SOCP approach especially for solving large-scale problems.


\section{Experiment Results}
\label{sec:5}
In this section, we present numerical results on both synthetic and real-world datasets to verify the superiority of our proposed reformulation in terms of computational costs.
We refer a nested Lanczos Method for TRS (LTRSR)~\cite{zhang2018nested} to perform the GLTR method, and use the implementation of Riemannian trust-region Newton (RTRNewton)~\cite{absil2007trust} from  Riemannian Manifold Optimization Library (ROPTLIB)~\cite{huang2018roptlib}. Similar as the setting in \citet{pmlr-v139-wang21d}, we compare the running time of above two methods with the SDP and SOCP approaches in \citet{pmlr-v139-wang21d}, averaged over 10 trials, to evaluate the performance of our new reformulation.
All the four methods solve the SDP, SOCP or the SCLS reformulations to their default precision and the solutions to the SPG-LS are recovered accordingly.

All simulations are implemented using MATLAB R2021b on a PC running Windows 10 Intel(R) Xeon(R) E5-2650 v4 CPU (2.2GHz) and 64GB RAM.
We report the results of  two real datasets and six synthetic datasets and defer other results to the supplementary material.  In all following tests, the parameter $\gamma$ is set as 0.1. 

\subsection{Real-world Dataset}

We first compare four methods on the red wine dataset \cite{cortez2009modeling}, which consist of 1599 instances each with 11 features.
The output label is a physiochemical measurement
ranged from 0 to 10, where a higher score means that the corresponding wine sample has better quality.
Wine producers would like to manipulate the data to fool the learner to predict a higher score when the raw label is smaller than some threshold $t$.
We consider the case that there are two kinds of providers $\mathcal{A}_{\text{modest}}$ and $\mathcal{A}_{\text{severe}}$, where the details of the manipulation are set the same as \citet{pmlr-v139-wang21d}.
\begin{figure}[!htb]
\centering
\subfigure{
\includegraphics[scale=0.27]{./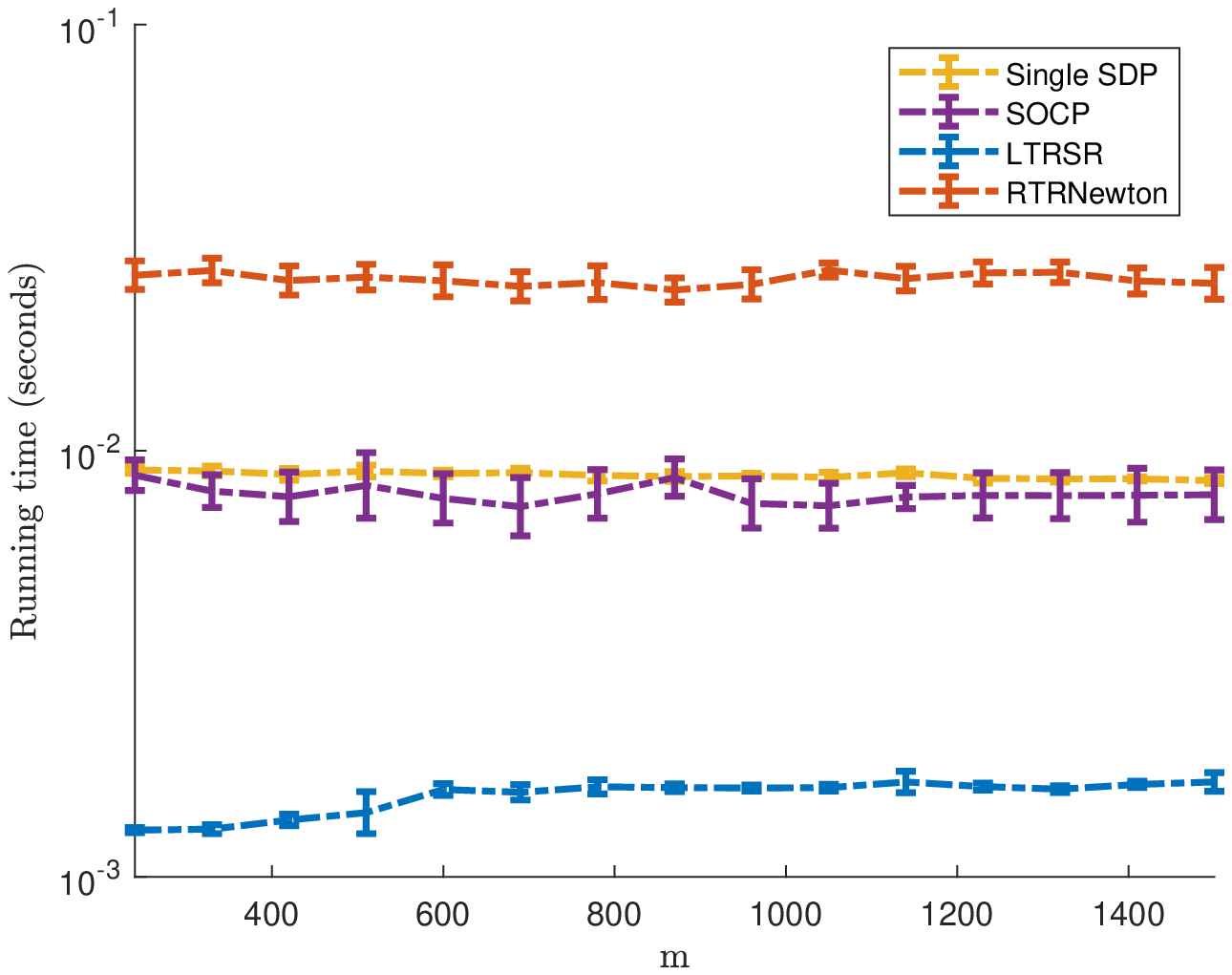} \label{fig:winemodestime}
}
\subfigure{
\includegraphics[scale=0.27]{./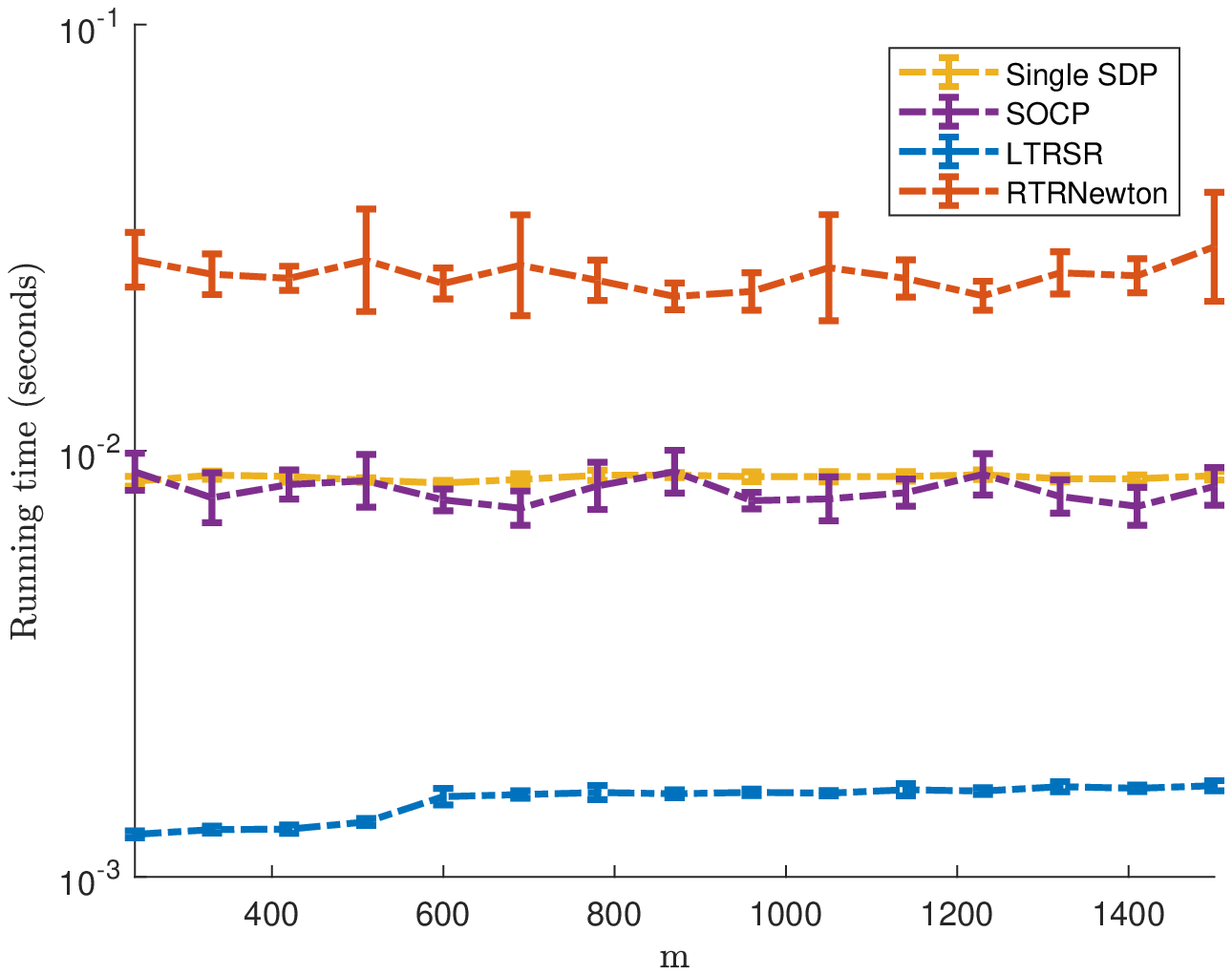} \label{fig:wineseveretime}}
\caption{Comparison of four different algorithms on the red wine dataset.  The left and right plots correspond to  $\mathcal A_{\text{modest}}$ and $\mathcal A_{\text{severe}}$, respectively.}
\label{fig:winetime}
\end{figure}
\begin{figure}[htbp]
\vspace{-0.1in}
\centering
\subfigure{
\includegraphics[scale=0.26]{./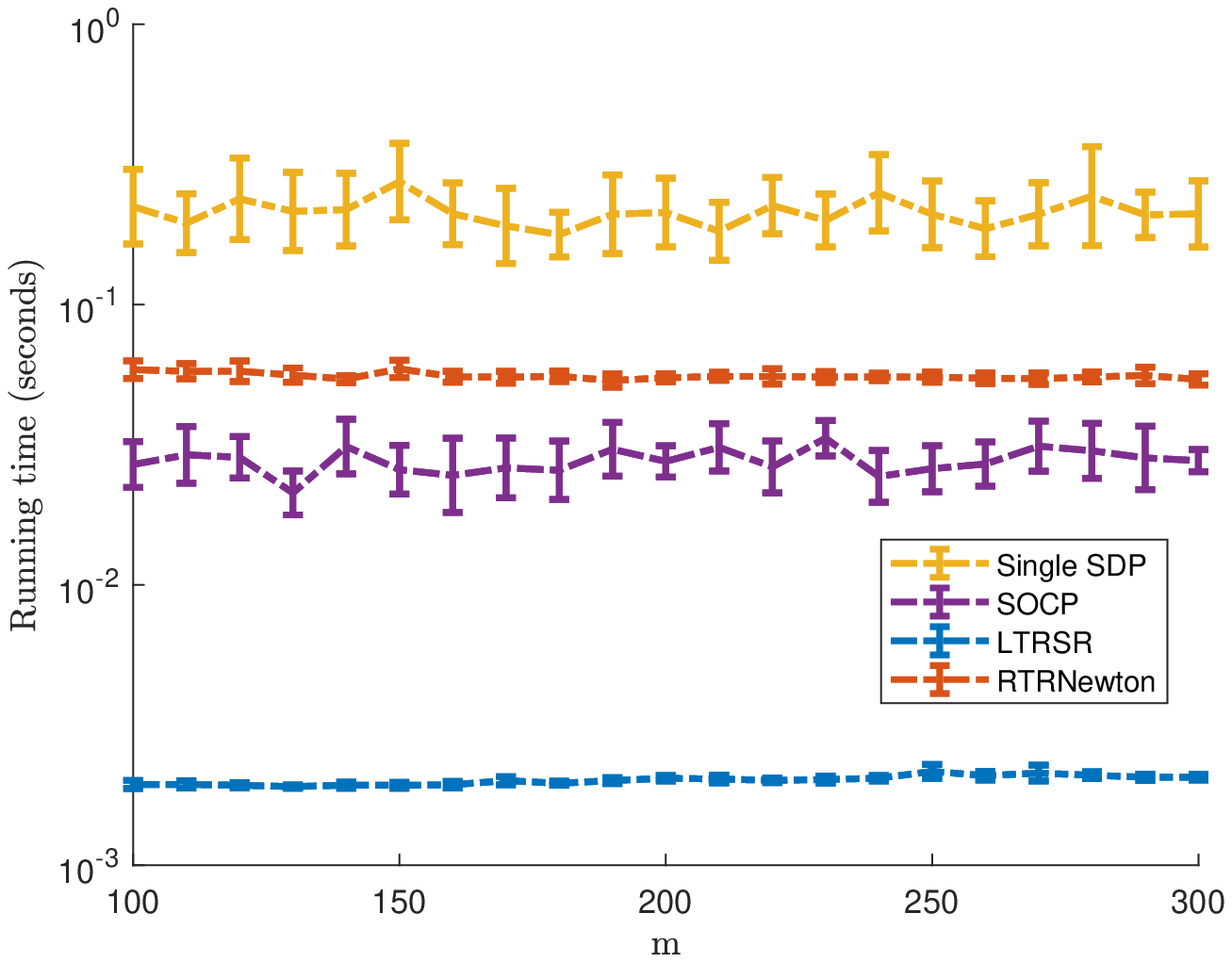} \label{fig:buildingmodestime}
}
\vspace{-0.1in}
\subfigure{
\includegraphics[scale=0.26]{./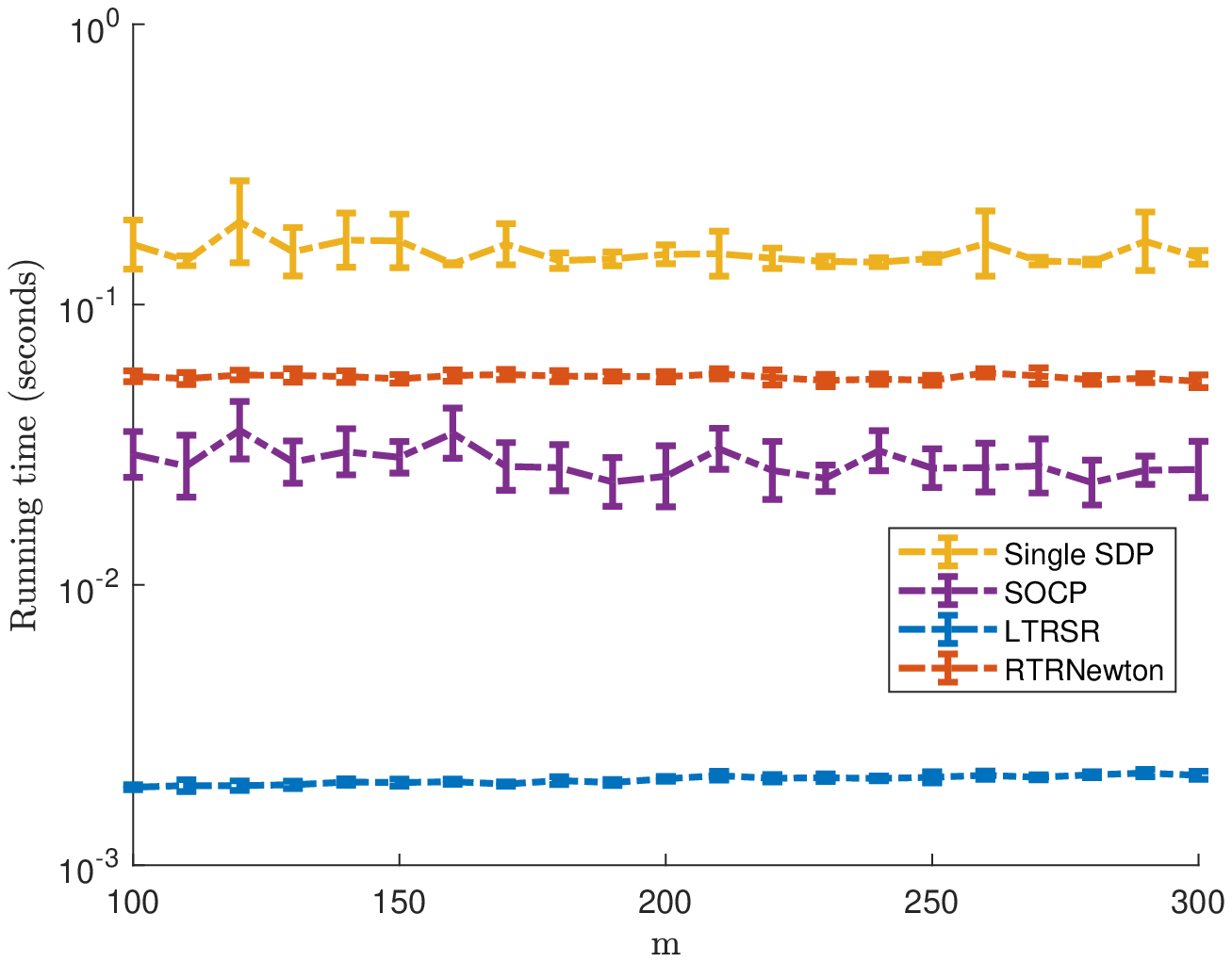} \label{fig:buildingseveretime}}
\caption{Comparison of four different algorithms on the residential building dataset.  The left and right plots correspond to  $\mathcal A_{\text{modest}}$ and $\mathcal A_{\text{severe}}$, respectively.}
\label{fig:buildtime}
\end{figure}

We also compare four methods on the residental building dataset\footnote{\url{https://archive.ics.uci.edu/ml/datasets/Residential+Building+Data+Set}} from the UCI data repository \cite{Dua:2019} as well. The building dataset includes 372 samples with 107 features. Each feature reflects information of a certain session such as project date, physical and financial elements. The output label is the sale prices to real estate single-family residential apartments.
We consider a scenario where the seller wants to manipulate the price to higher level.
As a buyer, our task is to predict the fair price under fake data.
We still consider two types of sellers:  $\mathcal A_{\text{modest}}$ with  $ \delta = 20$  and  $\mathcal A_{\text{severe}}$ with $ \delta= 40 $.

The computational time for both datasets is reported in Figures \ref{fig:winetime} and  \ref{fig:buildtime}. It show that LTRSR method outperforms others and follows by the SOCP and RTRNewton. One can also observe that RTRNewton is even more expensive than the SDP approach in  Figure \ref{fig:winetime}.
The main reason is that in the red wine dataset, the number of features $n$ is quite small ($n=11$). Thus, the spectral decomposition step, as well as the iterations of the interior point method, in the SOCP approach is cheap.

We then report the relative errors of objective values (MSEs) of the SOCP method
and our methods in Table \ref{tab:SPGLS-real-reltol-part} for red wine and residental building datasets. Indeed, all the methods have a very high accuracy as the relative errors are only up to  \texttt{3.37e-5}. More MSE comparisons can be found in \cref{ap:reldifrealdata}.

\begin{table}[!htb]
	\setlength\tabcolsep{3pt}
	\centering
	\caption{Relative error of objective values }
\resizebox{\linewidth}{!}{ 	
\begin{tabular}{@{}lcccccc@{}}
		\toprule
			\multicolumn{1}{c}{\multirow{2}{*}{\bf{Dataset}}}
		&\multicolumn{3}{c}{ $\bf{(f_\textbf{SOCP}-f_\textbf{LTRSR})/|f_\textbf{SOCP}|}$}
		&\multicolumn{3}{c}{ $\bf{(f_\textbf{SOCP}-f_\textbf{RTRNew})/|f_\textbf{SOCP}|}$}
		\\
		\cmidrule(r){2-4} \cmidrule(r){5-7}
		& \multicolumn{1}{c}{\textbf{AVG}} & \multicolumn{1}{c}{\textbf{MIN}} & \multicolumn{1}{c}{\textbf{MAX}} & \multicolumn{1}{c}{\textbf{AVG}} & \multicolumn{1}{c}{\textbf{MIN}} & \multicolumn{1}{c}{\textbf{MAX}}  \\
		\midrule
		\textbf{Wine Modest} & 6.17E-10 & 3.94E-12 & 4.23E-09 & -8.26E-10
		&-4.66E-09 &3.62E-09                    \\
		\textbf{Wine Severe} &1.32E-10 & 3.39E-12 & 1.84E-09& -8.30E-11 &-4.07E-10 &1.80E-09 \\
		\textbf{Build Modest} & 1.49E-07 & 1.93E-09 & 5.91E-07 &-2.19E-05 &-3.37E-05 &-1.28E-05 \\
		\textbf{Build Severe} &3.02E-08 & 4.02E-10 & 1.25E-07 &-1.96E-06 &-3.06E-06 &-1.14E-06   \\		
		\bottomrule
			\label{tab:SPGLS-real-reltol-part}
	\end{tabular}
}
\end{table}%

\subsection{Synthetic Dataset}
From the previous subsection, we also see that SOCP, LTRSR and RTRNewton are much faster than the SDP approach. To have a comprehensive comparison on wall-clock time among SOCP, LTRSR and RTRNewton, we test these methods on synthetic experiments.

\subsubsection{Dense Data}\label{sec:exp_syn_dense}

\begin{table}[ht]
        \setlength\tabcolsep{3pt}
        \centering
        \caption{Time (seconds) on synthetic data without sparsity}
        \resizebox{\linewidth}{!}{ 	
        \begin{tabular}{ccrccc}
                \toprule
                \multicolumn{6}{c}{$m=2n$}\vspace{-0.03in}\\
                \midrule
                $m$   & $n$ & SOCP (eig time)  & RTRNew & LTRSR  & Ratio \\
                \midrule
                2000 & 1000 & 0.585 (0.064) & 0.743 & $\bm{0.034}$ & 17 \\
                4000 & 2000 & 1.957 (0.317) & 2.459 & $\bm{0.177}$ & 11 \\
                8000 & 4000 & 10.693 (2.758) & 9.269 & $\bm{0.931}$ & 11 \\
                12000 & 6000 & 29.304 (9.444) & 18.824 & $\bm{2.120}$ & 14 \\
                16000 & 8000 & 58.561 (21.634) & 40.711 & $\bm{3.982}$ & 15 \\
                20000 & 10000 & 114.376 (49.754) & 59.768 & $\bm{6.099}$ & 19 \\
                \midrule

                \multicolumn{6}{c}{$m=n$} \vspace{-0.03in}\\
                \midrule
                $m$   & $n$ & SOCP (eig time)  & RTRNew & LTRSR  & Ratio \\
                \midrule
                1000 & 1000 & 0.454 (0.065) & 0.594 & $\bm{0.017}$ & 27 \\
                2000 & 2000 & 2.104 (0.325) & 2.600 & $\bm{0.097}$ & 22 \\
                4000 & 4000 & 10.795 (2.698) & 6.958 & $\bm{0.478}$ & 23 \\
                6000 & 6000 & 28.391 (9.481) & 17.835 & $\bm{1.083}$ & 26\\
                8000 & 8000 & 55.263 (21.555) & 35.510 & $\bm{2.011}$ & 27\\
                10000 & 10000 & 97.383 (40.091) & 58.009&$\bm{3.065}$ & 32 \\
                \midrule
                \multicolumn{6}{c}{$m=0.5n$}\vspace{-0.03in}\\
                \midrule
                $m$   & $n$ & SOCP (eig time)  & RTRNew & LTRSR  & Ratio \\
                \midrule
                500 & 1000 & 0.536 (0.059) & 0.523 & $\bm{0.022}$ & 24 \\
                1000 & 2000 & 1.748 (0.309) & 1.432 & $\bm{0.057}$ & 31 \\
                2000 & 4000 & 9.928 (2.526) & 6.932 & $\bm{0.251}$ & 40 \\
                3000 & 6000 & 27.230 (8.848) & 19.179 & $\bm{0.532}$ & 51 \\
                4000 & 8000 & 54.174 (20.258) & 28.953 & $\bm{0.928}$ & 58 \\
                5000 & 10000 & 94.548 (37.771) & 52.756 & $\bm{1.558}$ & 61 \\
            \bottomrule
        \end{tabular}
        }
\label{tab:syn-dense}
\end{table}%
We first conduct experiments on dense synthetic dataset. To have better validation of the effectiveness of our proposed reformulation, we use the same artificial dataset in \citet{pmlr-v139-wang21d}, which employs  \texttt{make\_regression} function in scikit-learn~\cite{pedregosa2011scikit} with setting the noise as 0.1 and other parameters as default.

Table  \ref{tab:syn-dense} summarises the comparison of time on different scales with $m = ln,\ l \in \{0.5,1,2\}$. Here, ``SOCP" represents  total time needed for the SOCP approach (including ``eig time"), ``eig time”  represents the spectral decomposition time in the SOCP approach, ``RTRNew" represents the RTRNewton method, ``LTRSR" represents the LTRSR method, ``Ratio" represents the ratio of times of  SOCP method and LTRSR.

From Table \ref{tab:syn-dense}, we find that in large scale setting, the two methods RTRNewton and LTRSR are more efficient. LTRSR is of orders faster than the other two methods.
From the ``Ratio'' values we also see that the time cost of SOCP approach is several tens times of that of LTRSR. 
We also observe that the spectral decomposition time in formulating SOCP is expensive and takes about 40\% of total time.
Indeed, the spectral decomposition time  becomes much larger as $n$ increases, which is also evidenced from our experiments for the sparse data setting in  Table \ref{tab:syn-sparse} below.

 \subsubsection{Sparse Data}\label{sec:exp_syn_sparse}
To further show the efficacy of our proposed reformulation, we conduct experiments on synthetic data with high feature dimension and various sparsity.  We apply the \texttt{sprandn} function in MATLAB to obtain the data matrix $X \in \mathbb{R}^{m\times n}$, whose $i$-th row is input vector $\{\x_i\}_{i=1}^m$.
The noise measurements $\{\xi_i\}_{i=1}^m$ i.i.d from the uniform distribution $[0, 0.5] $. Then the output label $\{y_i\}_{i=1}^m$ via
$
y_i = \x_i^T{\mathbb{\beta}} + \xi_i.
$ Following \citet{pmlr-v139-wang21d}, we set the fake output label  as $z_i = \max\{y_i,y_{0.25}\}$.

\begin{table}[h]
        \setlength\tabcolsep{3pt}
        \centering
        \caption{Time (seconds) on synthetic data with sparsity}
        \resizebox{\linewidth}{!}{ 	
        \begin{tabular}{ccrccc}
            \toprule
            \multicolumn{6}{c}{sparsity = 0.01}\vspace{-0.01in}\\
            \midrule
            $m$   & $n$ & SOCP (eig time)  & RTRNew & LTRSR  & Ratio \\
            \midrule
           5000 & 10000 & 71.601 (39.432) & 13.124 & $\bm{0.225}$ & 318 \\
           7500 & 15000 & 217.529 (120.456) & 26.551 & $\bm{0.534}$ & 407 \\
           10000 & 20000 & 513.751 (288.490) & 47.411 & $\bm{1.049}$ & 490 \\
           12500 & 25000 & 941.394 (539.619) & 69.421 & $\bm{1.606}$ & 586 \\
           15000 & 30000 & 1539.443 (865.813) & 113.223 & $\bm{2.416}$ & 637 \\
            \midrule

            \multicolumn{6}{c}{sparsity = 0.001} \vspace{0.01in}\\
            \midrule
            $m$   & $n$ & SOCP (eig time)  & RTRNew & LTRSR  & Ratio \\
            \midrule
           5000 & 10000 & 61.587 (45.253) & 1.416 & $\bm{0.028}$ & 2200 \\
           7500 & 15000 & 153.075 (117.389) & 2.379 & $\bm{0.053}$ & 2888 \\
           10000 & 20000 & 335.956 (259.671) & 5.453 & $\bm{0.113}$ & 2973 \\
           12500 & 25000 & 638.175 (491.391) & 7.715 & $\bm{0.168}$ & 3799 \\
           15000 & 30000 & 1082.261 (832.413) & 12.090 & $\bm{0.235}$ & 4605 \\
            \midrule
            \multicolumn{6}{c}{sparsity = 0.0001}\vspace{-0.01in}\\
            \midrule
            $m$   & $n$ & SOCP (eig time)  & RTRNew & LTRSR  & Ratio \\
            \midrule
           5000 & 10000 & 49.869 (45.462) & 0.391 & $\bm{0.009}$ & 5541 \\
           7500 & 15000 & 141.507 (134.525) & 0.716 & $\bm{0.014}$ & 10108 \\
           10000 & 20000 & 310.991 (289.447) & 0.979 & $\bm{0.020}$ & 15550 \\
           12500 & 25000 & 587.124 (540.314) & 1.301 & $\bm{0.030}$ & 19571 \\
           15000 & 30000 & 991.070 (912.015) & 2.171 & $\bm{0.037}$ & 26786 \\
        \bottomrule
    \end{tabular}
    }
\label{tab:syn-sparse}
\end{table}%

Table \ref{tab:syn-sparse} summarises  time comparisons on  synthetic datasets with different sparsity and different dimension for $m = 0.5n$.
From these tables, we find that  LTRSR and RTRNewton perform much better than the dense case, and their superiority over the SOCP approach becomes larger. This is mainly because both methods are the matrix free methods that require only matrix vector products in each iteration.
However, the SOCP do not benefit from sparsity as well as the other two methods. We find that,  by comparing the "eig time" for different instances with the same dimension but different sparsity, the "eig time" dominates the time of SOCP approach as the spectral decomposition cannot utilize the sparsity of the data.
From the ``Ratio" values, we find that the outperformance of LTRSR grows considerably when the sparsity and problem size increase. In the case of $(m,n)= (15000,30000)$ and sparsity = 0.0001, LTRSR takes up to 26,000+ times faster than the SOCP approach. Moreover, our LTRSR takes less than 0.05 second for all the instances with  sparsity = 0.0001.
More reports on relative errors of all methods are reported in \cref{ap:reldiffsyn}.
\section{Conclusion}
We propose an SCLS reformulation for the SPG-LS and show its optimal solution can be used to recover an optimal solution of the SPG-LS.
We further show that an $\epsilon$ optimal solution of the SPG-LS can be also recovered from an $\epsilon$ optimal solution of the SCLS.
Moreover, such an $\epsilon$ optimal solution  obtained in runtime $\tO(N/\sqrt\epsilon)$.
We also introduce two practical efficient methods, LTRSR and RTRNewton, for solving the SCLS.
Experiments show that the SCLS approach is much faster than the existing best approach.
In particular, the performance of the LTRSR dominates both RTRNewton and SOCP methods.

\section*{Acknowledgements}
Wen Huang is partly supported by the Fundamental Research Funds for the Central Universities 20720190060 and NSFC 12001455.
Rujun Jiang is partly supported by NSFC 12171100 and 72161160340, and Natural Science Foundation of Shanghai 22ZR1405100. 
Xudong Li is partly supported by the ``Chenguang Program" by Shanghai Education Development Foundation and Shanghai Municipal Education Commission 19CG02, and the Shanghai Science and Technology Program 21JC1400600.


%
%
\appendix
\newpage
\onecolumn
\begin{center}
{\Large \bf Appendix}
\end{center}
\vspace{-0.05in}
\par\noindent\rule{\textwidth}{1pt}
\setcounter{section}{0}

\renewcommand\thesection{\Alph{section}}
\renewcommand\thesubsection{\arabic{subsection}}

\section{Deferred proof for \cref{thm:RTRN}}\label{ap:RTRN}
\begin{proof}
We only need to verify the assumptions of Theorem~7.4.4 and Theorem~7.4.11 of~\citet{AMS2008}.

By definition, the function $q$ is bounded below. Since the unit sphere is a compact manifold,
we have that the Riemannian Hessian $\Hess\; q(\br)$ is bounded above for any $\br \in \mathcal{S}^{n}$,
that $q\circ R$ is radially Lipschitz continuously differentiable, and that $q$ is Lipschitz continuously
differentiable by Proposition~7.4.5 and Corollary~7.4.6  of~\citet{AMS2008}.
The implementation in ROPTLIB for the subproblem (17) is Algorithm~11 of~\citet{AMS2008} with $\theta = 1$.
Therefore, the Cauchy decrease inequality is satisfied. It follows that all the assumptions of Theorem~7.4.4
are satisfied.

Since Retraction~\eqref{Retr} is second order, the assumption of~(7.36) in Theorem~7.4.11 of~\citet{AMS2008}
holds with $\beta_{\mathcal{H}} = 0$.
Since the function~$q$ is $C^{\infty}$ and the manifold is compact, the assumption of
(7.37) in Theorem~7.4.11 of~\citet{AMS2008} holds. The local quadratic convergence rate in~\eqref{rtrquadratic} thus follows.
\end{proof}
\section{Additional Experiments}

In this section, we provide  additional numerical results to further show the efficiency of our proposed reformulation.
\subsection{Real-world Dataset}
We demonstrate the speed of our methods on two other real-world datasets, the insurance dataset\footnote{\url{https://www.kaggle.com/mirichoi0218/insurance}} and the blogfeedback dataset\footnote{\url{https://archive.ics.uci.edu/ml/datasets/BlogFeedback}}. Similar as the setting in previous section, we compare the wall-clock time of RTRNewton \cite{absil2007trust} and LTRSR \cite{zhang2018nested} approaches with the SDP and SOCP methods proposed in \citet{pmlr-v139-wang21d}.

We still apply four methods on the insurance dataset that has 1,338 instances with 7 features. Each feature shows information on certain aspects such as age, sex, bmi and region. The output labels are individual medical costs by buying health insurance. For model accuracy, we transform  categorical features such as sex into a one-hot vector. We assume that the individuals incline to modify self-related data to reduce their insurance costs. Formally, the individual's desired outcome can be defined as
$
z_i = \max \{y_i + \delta,\ 0\}.
$
We have two types of individual: $\mathcal A_{\text{modest}}$ with  $ \delta = -100$  and  $\mathcal A_{\text{severe}}$ with $ \delta= -300 $. All the hyperparameters are the same as those in \citet{pmlr-v139-wang21d}.
As an insurer, our goal is to select a good price model to predict the insurance costs as true as possible.

To further illustrate the effectiveness of our new reformulation, we compare four methods on the blogfeedback dataset. The blog dataset contains 52,397 samples each with 281 features processed from raw feedback materials on the Internet. Each feature represents information of a certain session. The output label is the number of comments. As a learner, our task is to predict the future comments of blog.
Similarly, we assume that the true output label $\y$ would be modified by data providers in order to achieve the goal of increasing blog comments. For example, public media intend to manipulate data to add the blog comments and  enhance its news popularity.
 Formally, we define the altered label as $ z_i = y_i + \delta$. We still have two types of data providers:  $\mathcal A_{\text{modest}}$ with  $ \delta = 5$  and  $\mathcal A_{\text{severe}}$ with $ \delta= 10 $.

The wall-clock time comparison can be found in Figure \ref{fig:bloginsurtime}. Similar to the previous cases, LTRSR outperforms other approaches. However, as the dimension in this problem is too small, the comparison of the other three methods does not match their performance for large scale problems.

\begin{figure}[htbp]
\centering

\subfigure{
\includegraphics[scale=0.28]{./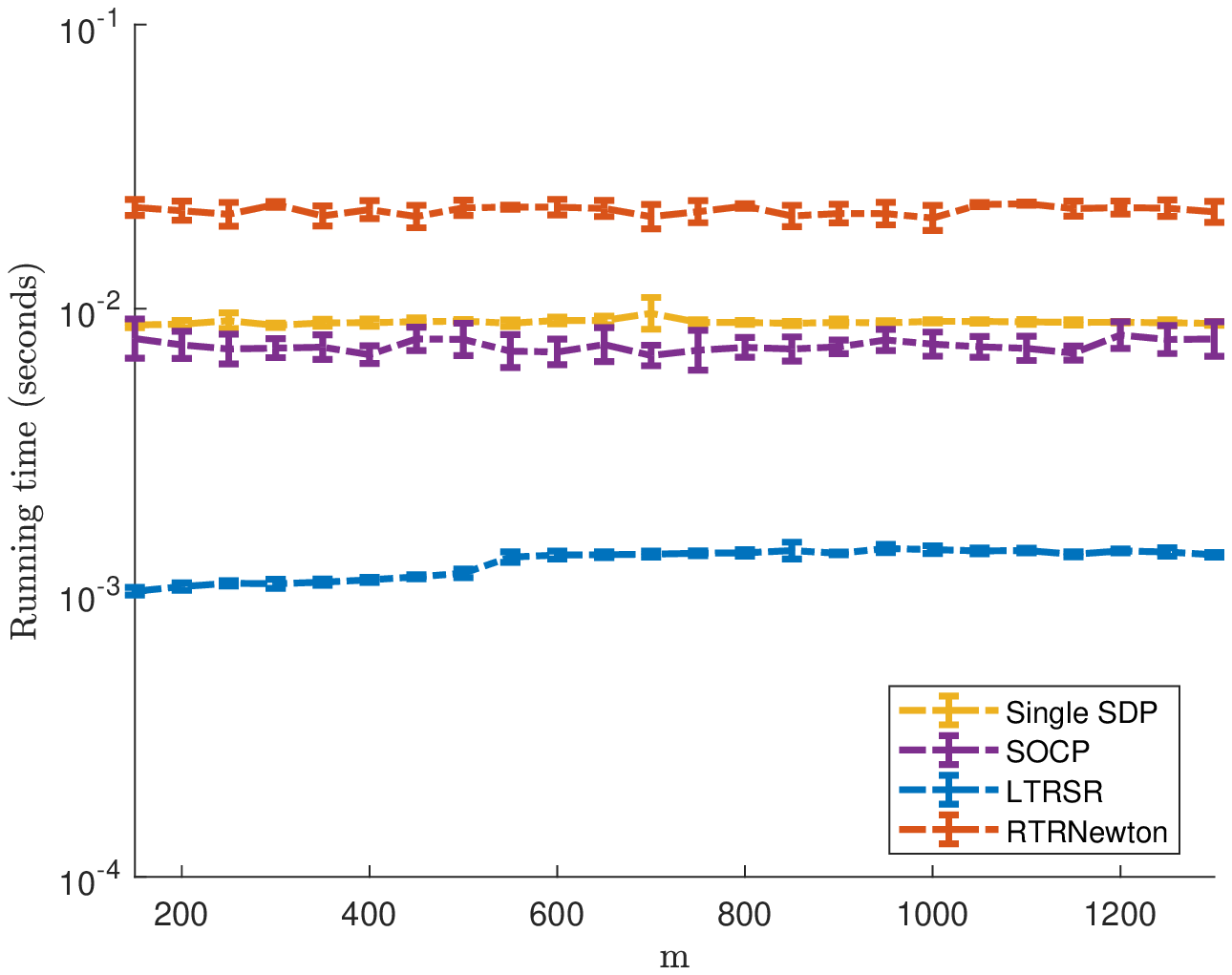} \label{fig:insurmodestime}
}
\subfigure{
\includegraphics[scale=0.28]{./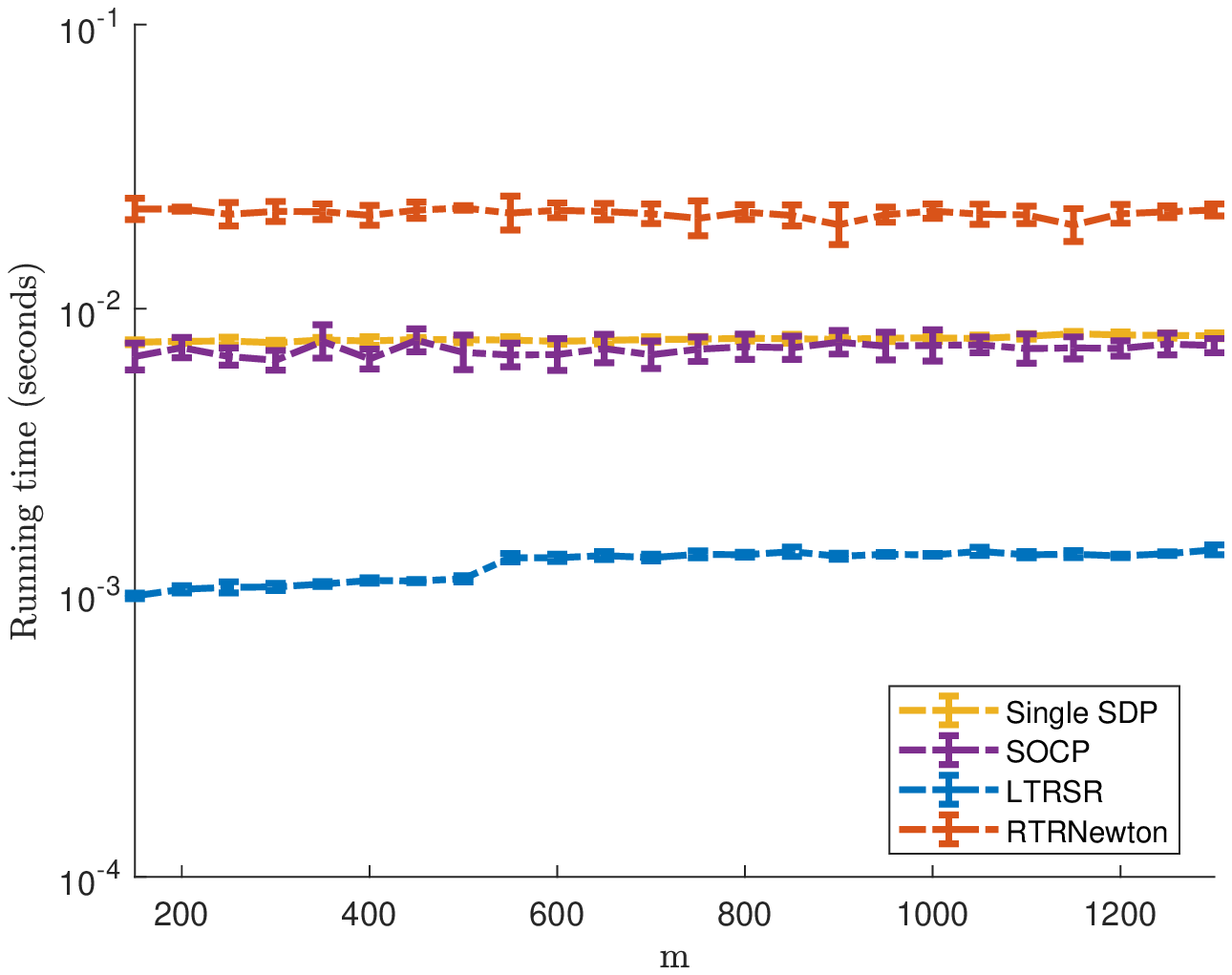} \label{fig:insurseveretime}}
\subfigure{
\includegraphics[scale=0.28]{./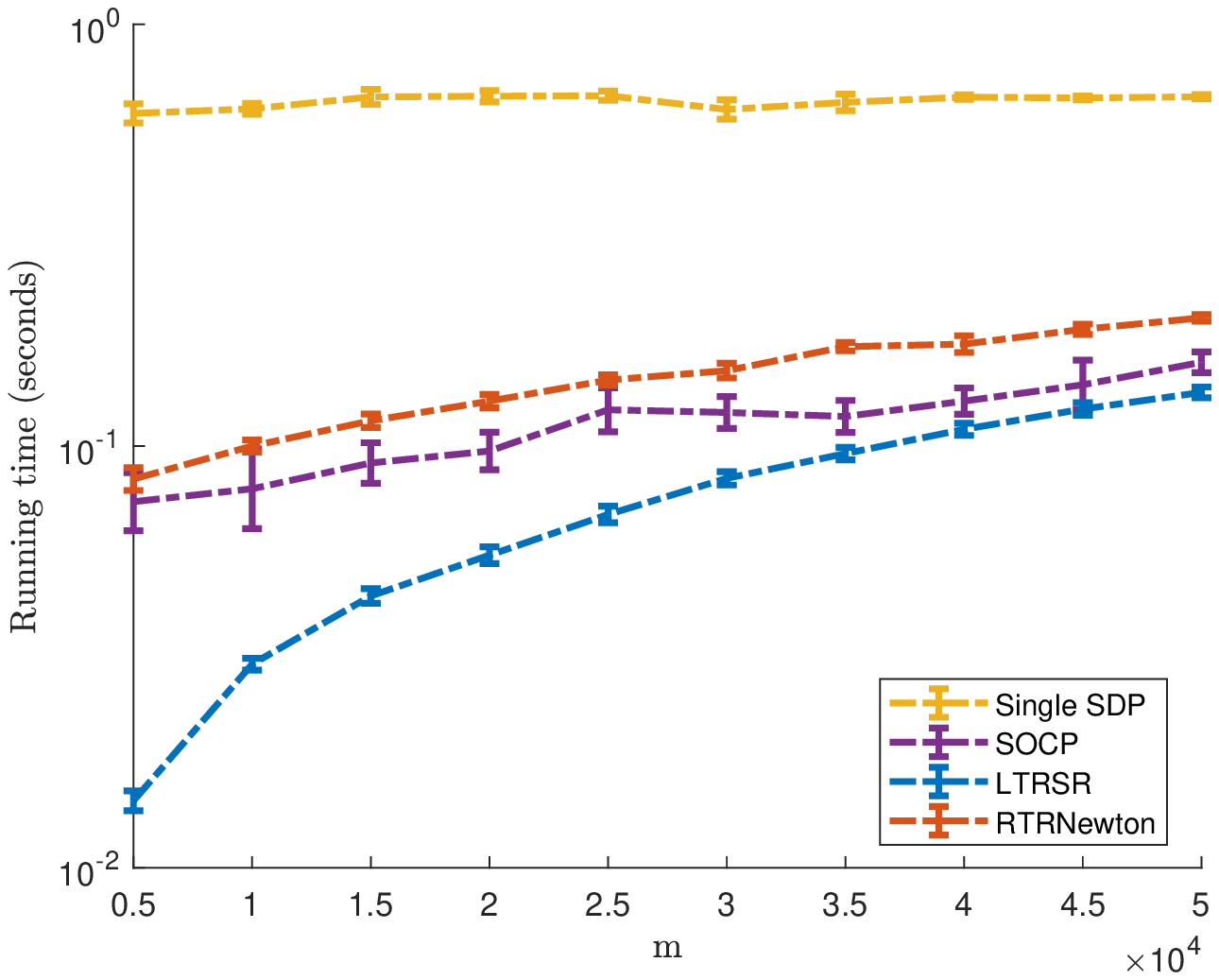} \label{fig:blogmodestime}
}
\subfigure{
\includegraphics[scale=0.28]{./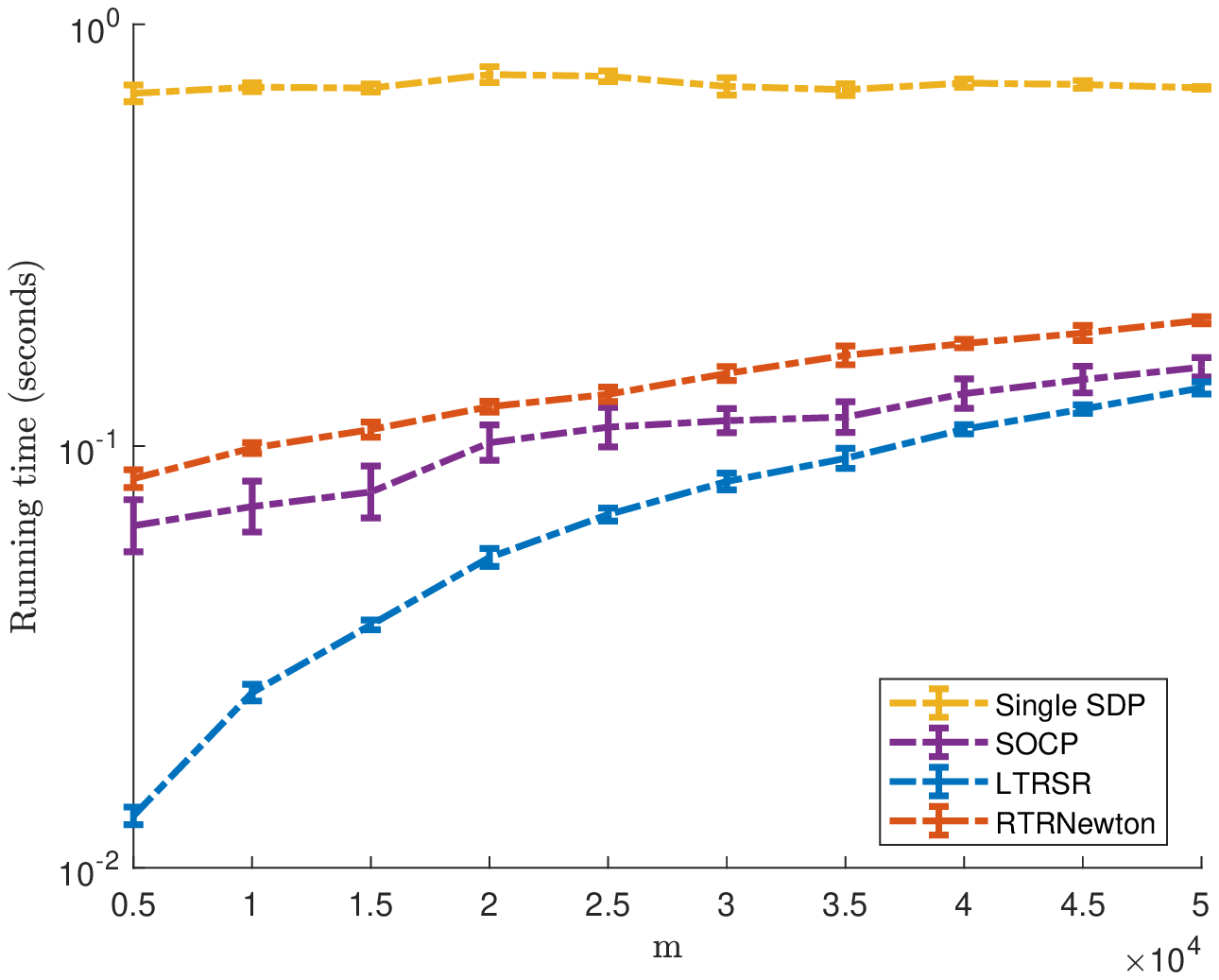} \label{fig:blogseveretime}}
\caption{Performance comparison between different algorithms on the insurance and blog dataset.  The left two plots correspond to the wall-clock time comparison of insurance dataset generated  by $\mathcal A_{\text{modest}}$ and $\mathcal A_{\text{severe}}$, whilst the right two plots correspond to the wall-clock time comparison of blog dataset generated by $\mathcal A_{\text{modest}}$ and $\mathcal A_{\text{severe}}$.}
\label{fig:bloginsurtime}
\end{figure}
\subsection{Synthetic Dataset}
To further demonstrate the efficiency and adaptiveness to high dimension problems of our proposed reformulation, we conduct experiments on more synthetic datasets with different hyperparameters and  various dimensions and sparsity.
\subsubsection{Dense Data}

We first perform experiments with $\gamma=0.01$ on synthetic datasets without sparsity to show the superiority of our reformulation under different hyperparameter.
 All the other settings are the same as in Section \ref{sec:exp_syn_dense}.

Table \ref{tab:app-syn-dense} shows the comparison of wall-clock time on different scales with $m = ln,\ l \in \{0.5,1,2\}$, which is similar to the case of $\gamma=0.1$. We observed that both  SCLS approaches are faster than the SOCP approach. Moreover, all the LTRSR is less than 6 seconds while the SOCP can take up to about 110 seconds.

\begin{table}[!h]
\setlength\tabcolsep{3pt}
\centering
\caption{Time (seconds) on synthetic data without sparsity, $\gamma=0.01$}
\resizebox{\linewidth}{!}{
\begin{tabular}{ccccc|cccc|cccc}
	\toprule
    \multirow{2}{*}{ $n$}
    &\multicolumn{4}{c|}{$m =2n$}
    &\multicolumn{4}{c|}{$m =n$}
    &\multicolumn{4}{c}{$m =0.5n$} \\
    \cmidrule(r){2-5} \cmidrule(r){6-9} \cmidrule(r){10-13}
	
      &  SOCP (eig time)  & RTRNew   & LTRSR  & Ratio
      &  SOCP (eig time)  & RTRNew   & LTRSR  & Ratio
      &  SOCP (eig time)  & RTRNew   & LTRSR  & Ratio  \\
    \midrule
     1000 & 0.619 (0.087) & 0.712 & $\bm{0.031}$ & 20  & 0.564 (0.086) & 0.704 & $\bm{0.020}$ & 28   & 0.466 (0.078) & 0.649 & $\bm{0.012}$ & 39\\
     2000 & 2.244 (0.419) & 2.519 & $\bm{0.138}$ & 16  & 1.900 (0.471) & 1.828 & $\bm{0.098}$ & 19  & 2.438 (0.401) & 2.092 & $\bm{0.060}$ & 41\\
     4000 & 12.123 (3.448) & 5.179 & $\bm{0.956}$ & 13  & 11.597 (3.539) & 6.789 & $\bm{0.499}$ & 23  & 11.645 (3.212) & 6.778 & $\bm{0.249}$ & 47\\
     6000 & 33.093 (11.903) & 15.857 & $\bm{2.135}$ & 16  & 31.262 (11.691) & 18.617 & $\bm{1.053}$ & 30  & 32.812 (11.008) & 11.070 & $\bm{0.561}$ & 58\\
     8000 & 66.816 (27.466) & 38.501 & $\bm{3.768}$ & 18  & 63.983 (27.512) & 34.655 & $\bm{1.984}$ & 32  & 59.725 (25.061) & 27.201 & $\bm{0.961}$ & 62\\
     10000 & 118.044 (49.477) & 59.551 & $\bm{5.916}$ & 20  & 109.516 (50.018) & 54.060 & $\bm{3.048}$ & 36  & 104.251 (47.261) & 39.611 & $\bm{1.529}$ & 68\\
	\bottomrule
		\label{tab:app-syn-dense}
\end{tabular}
}
\end{table}%

\subsubsection{Sparse Data}

\begin{table}[!thb]


\setlength\tabcolsep{2pt}
\centering
\caption{Time (seconds) on synthetic data with different sparsity}
\resizebox{\linewidth}{!}{
\begin{tabular}{@{}ccccc|cccc|cccc@{}}
	\toprule
	\multicolumn{13}{c}{$m=n$}\\
	\midrule
	\multirow{2}{*}{$n$}
	&\multicolumn{4}{c|}{sparsity = 0.01}
	&\multicolumn{4}{c|}{sparsity = 0.001}
	&\multicolumn{4}{c}{sparsity = 0.0001} \\
	\cmidrule(r){2-5} \cmidrule(r){6-9} \cmidrule(r){10-13}
	
	&  SOCP (eig time)  & RTRNew   & LTRSR  & Ratio
	&  SOCP (eig time)  & RTRNew   & LTRSR  & Ratio
	&  SOCP (eig time)  & RTRNew   & LTRSR  & Ratio  \\
	\midrule
	10000  & 84.547 (40.602) & 21.947 & $\bm{0.487}$ & 174   & 56.101 (40.322) & 1.892 & $\bm{0.061}$ & 920   & 41.854 (38.038) & 0.499 & $\bm{0.012}$ & 3488  \\
	15000  & 254.85 (126.365) & 50.797 & $\bm{1.142}$ & 223   & 160.486 (125.064) & 5.166 & $\bm{0.131}$ & 1225   & 130.071 (118.582) & 0.540 & $\bm{0.018}$ & 7226  \\
	20000  & 550.191 (281.018) & 107.358 & $\bm{2.088}$ & 264   & 355.746 (278.687) & 10.932 & $\bm{0.222}$ & 1603   & 299.496 (265.987) & 1.075 & $\bm{0.029}$ & 10327  \\
	25000  & 1090.231 (581.289) & 159.592 & $\bm{3.502}$ & 311   & 728.118 (560.429) & 15.543 & $\bm{0.357}$ & 2040   & 591.172 (509.912) & 1.285 & $\bm{0.038}$ & 15557  \\
	30000  & 1726.133 (963.081) & 248.325 & $\bm{5.393}$ & 320   & 1240.319 (979.66) & 19.947 & $\bm{0.528}$ & 2349   & 1040.441 (888.321) & 2.323 & $\bm{0.056}$ & 18579  \\
	\midrule

	\multicolumn{13}{c}{$m=2n$}\\
	\midrule
	\multirow{2}{*}{	$n$}
	&\multicolumn{4}{c|}{sparsity = 0.01}
	&\multicolumn{4}{c|}{sparsity = 0.001}
	&\multicolumn{4}{c}{sparsity = 0.0001} \\
	\cmidrule(r){2-5} \cmidrule(r){6-9} \cmidrule(r){10-13}

	&  SOCP (eig time)  & RTRNew   & LTRSR  & Ratio
	&  SOCP (eig time)  & RTRNew   & LTRSR  & Ratio
	&  SOCP (eig time)  & RTRNew   & LTRSR  & Ratio  \\
	\midrule

	10000 & 108.861 (48.901) & 50.508 & $\bm{1.096}$ & 99  & 64.521 (48.604) & 5.765 & $\bm{0.119}$ & 542 & 45.599 (39.977) & 0.326 & $\bm{0.016}$ & 2850\\
	15000 & 316.424 (151.406) & 117.217 & $\bm{2.609}$ & 121  & 173.231 (132.098) & 12.230 & $\bm{0.257}$ & 674 & 148.608 (127.759) & 0.947 & $\bm{0.028}$ & 5307\\
	20000 & 643.464 (324.073) & 219.622 & $\bm{5.056}$ & 127  & 379.145 (289.701) & 23.199 & $\bm{0.457}$ & 830 & 338.119 (283.882) & 1.241 & $\bm{0.048}$ & 7044\\
	25000 & 1149.663 (605.581) & 395.202 & $\bm{7.818}$ & 147  & 720.837 (561.652) & 43.718 & $\bm{0.725}$ & 994 & 684.654 (563.812) & 2.254 & $\bm{0.061}$ & 11224\\
	30000 & 2026.088 (1080.883) & 598.468 & $\bm{12.022}$ & 169  & 1217.779 (937.93) & 45.074 & $\bm{1.100}$ & 1107 & 1106.028 (899.099) & 3.869 & $\bm{0.083}$ & 13326\\
	 \midrule

	 \multicolumn{13}{c}{$m=3n$}\\
	\midrule
	\multirow{2}{*}{$n$}
	&\multicolumn{4}{c|}{sparsity = 0.01}
	&\multicolumn{4}{c|}{sparsity = 0.001}
	&\multicolumn{4}{c}{sparsity = 0.0001} \\
	\cmidrule(r){2-5} \cmidrule(r){6-9} \cmidrule(r){10-13}
	
	&  SOCP (eig time)  & RTRNew   & LTRSR  & Ratio
	&  SOCP (eig time)  & RTRNew   & LTRSR  & Ratio
	&  SOCP (eig time)  & RTRNew   & LTRSR  & Ratio  \\
	\midrule
	10000 & 113.978 (51.388) & 76.183 & $\bm{1.592}$ & 72  & 56.434 (41.871) & 8.557 & $\bm{0.171}$ & 330 & 46.739 (39.697) & 0.446 & $\bm{0.022}$ & 2125  \\
	15000 & 298.498 (143.174) & 200.29 & $\bm{3.932}$ & 76  & 171.902 (129.377) & 22.335 & $\bm{0.396}$ & 434 & 150.666 (123.946) & 0.774 & $\bm{0.037}$ & 4072  \\
	20000 & 596.182 (288.772) & 356.849 & $\bm{7.750}$ & 77  & 417.155 (312.681) & 42.438 & $\bm{0.718}$ & 581 & 339.114 (274.737) & 1.713 & $\bm{0.061}$ & 5559  \\
	25000 & 1152.428 (606.097) & 614.192 & $\bm{13.592}$ & 85  & 782.197 (587.333) & 43.646 & $\bm{1.151}$ & 680 & 641.319 (518.51) & 2.678 & $\bm{0.085}$ & 7545  \\
	30000 & 1949.035 (1039.845) & 948.449 & $\bm{21.749}$ & 90  & 1298.734 (971.861) & 93.219 & $\bm{1.698}$ & 765 & 1085.053 (876.409) & 5.526 & $\bm{0.119}$ & 9118  \\	
	\bottomrule
	\label{tab:app-syn-sparse}
\end{tabular}
}
\end{table}%

We proceed to perform experiments on large-scale sparse dataset of various scales  $m = ln,\ l \in \{1,2,3\}$ with different sparsity. 
All the other settings are the same as in Section \ref{sec:exp_syn_sparse}.

From Table \ref{tab:app-syn-sparse}, we observed the great superiority of our SCLS reformulation since all the LTRSR and RTRNewton method faster than SOCP method.
LTRSR is of several orders faster than the SOCP approach, especially when the sparsity and dimension grow.
The spectral time of decomposition in formulating SOCP is quite expensive as the problem size grows. In the case $(m,n)= (90000,30000)$, sparsity $ = 0.01$, the decomposition time is up about 1000 seconds, while the LTRSR method only takes about 22 seconds to solve the problem.

\section{Relative Error}
This section reports  relative errors of function values and MSEs between SOCP
and our methods for all our experiments in Tables \ref{tab:SPGLS-real-reltol}, \ref{tab:SPGLS-real-MSE},  \ref{tab:SPGLS-dense-reltol} and \ref{tab:SPGLS-sparse-reltol}.
\subsection{Real-world Dataset}
In Tables \ref{tab:SPGLS-real-reltol} and \ref{tab:SPGLS-real-MSE}, we show the relative errors of MSEs on training sets and  test sets, respectively. Here, abbreviations ``Insur" represents insurance dataset, ``Build" represents building dataset, ``f" represents the function value of related methods and ``M" represents its MSE. In Table \ref{tab:SPGLS-real-reltol}, We observe that LTRSR is more accurate than SOCP as its relative error preserves positive. Indeed, all the methods
have a very high accuracy as the relative errors are only up to to \texttt{3.37e-5}.  Table \ref{tab:SPGLS-real-MSE} shows that all methods have similar test accuracy as the relative errors of the test MSEs of all the methods are up to \texttt{5.27e-4}.

\begin{table}[!h]
	\setlength\tabcolsep{5pt}
	\centering
	\caption{Relative error of objective values on training sets }
\resizebox{\linewidth}{!}{
	\begin{tabular}{@{}lcccccclcccccc@{}}
		\toprule
		\multicolumn{1}{c}{\multirow{2}{*}{\bf{{Dataset}}}}
		&\multicolumn{3}{c}{ $\bf{(f_\textbf{SOCP}-f_\textbf{LTRSR})/|f_\textbf{SOCP}|}$}
		&\multicolumn{3}{c}{ $\bf{(f_\textbf{SOCP}-f_\textbf{RTRNew})/|f_\textbf{SOCP}|}$}
		&\multicolumn{1}{c}{\multirow{2}{*}{\bf{Dataset}}}	
		&\multicolumn{3}{c}{ $\bf{(f_\textbf{SOCP}-f_\textbf{LTRSR})/|f_\textbf{SOCP}|}$}
		&\multicolumn{3}{c}{ $\bf{(f_\textbf{SOCP}-f_\textbf{RTRNew})/|f_\textbf{SOCP}|}$}
		\\
		\cmidrule(r){2-4} \cmidrule(r){5-7} \cmidrule(r){9-11} \cmidrule(r){12-14}
		& \multicolumn{1}{c}{\textbf{AVG}} & \multicolumn{1}{c}{\textbf{MIN}} & \multicolumn{1}{c}{\textbf{MAX}} & \multicolumn{1}{c}{\textbf{AVG}} & \multicolumn{1}{c}{\textbf{MIN}} & \multicolumn{1}{c}{\textbf{MAX}} & & \multicolumn{1}{c}{\textbf{AVG}} & \multicolumn{1}{c}{\textbf{MIN}} & \multicolumn{1}{c}{\textbf{MAX}} & \multicolumn{1}{c}{\textbf{AVG}} & \multicolumn{1}{c}{\textbf{MIN}} & \multicolumn{1}{c}{\textbf{MAX}} \\ \midrule

		\textbf{Wine Modest} & 6.17E-10 & 3.94E-12 & 4.23E-09 & -8.26E-10
		&-4.66E-09 &3.62E-09 & \textbf{Insur Modest}                  & 1.01E-05                         & 1.40E-06                         & 3.31E-05                         & 1.01E-05                         & 1.40E-06                         & 3.31E-05                         \\
		\textbf{Wine Severe} &1.32E-10 & 3.39E-12 & 1.84E-09& -8.30E-11 &-4.07E-10 &1.80E-09 & \textbf{Insur Severe}         & 2.57E-06                         & 4.90E-07                         & 9.56E-06                         & 2.57E-06                         & 4.90E-07                         & 9.56E-06\\
		\textbf{Build Modest} & 1.49E-07 & 1.93E-09 & 5.91E-07 &-2.19E-05 &-3.37E-05 &-1.28E-05 &\textbf{Blog Modest}              & 8.07E-09                         & 3.33E-10                         & 3.82E-08                         & 8.07E-09                         & 3.33E-10                         & 3.82E-08\\
		\textbf{Build Severe} &3.02E-08 & 4.02E-10 & 1.25E-07 &-1.96E-06 &-3.06E-06 &-1.14E-06 & \textbf{Blog Severe}              & 3.55E-08                         & 2.80E-10                         & 2.24E-07                         & 3.55E-08                         & 2.80E-10                         & 2.24E-07  \\		
		\bottomrule
			\label{tab:SPGLS-real-reltol}
	\end{tabular}
}
	\setlength\tabcolsep{3pt}
	\centering
	\caption{Relative error of MSEs on test sets}
\resizebox{\linewidth}{!}{
	\begin{tabular}{@{}lcccccclcccccc@{}}
		\toprule
		\multicolumn{1}{c}{\multirow{2}{*}{\bf{{Dataset}}}}
		&\multicolumn{3}{c}{ $\bf{(M_\textbf{SOCP}-M_\textbf{LTRSR})/|M_\textbf{SOCP}|}$}
		&\multicolumn{3}{c}{ $\bf{(M_\textbf{SOCP}-M_\textbf{RTRNew})/|M_\textbf{SOCP}|}$}
		&\multicolumn{1}{c}{\multirow{2}{*}{\bf{{Dataset}}}}
		&\multicolumn{3}{c}{ $\bf{(M_\textbf{SOCP}-M_\textbf{LTRSR})/|M_\textbf{SOCP}|}$}
		&\multicolumn{3}{c}{ $\bf{(M_\textbf{SOCP}-M_\textbf{RTRNew})/|M_\textbf{SOCP}|}$}
		\\
		\cmidrule(r){2-4} \cmidrule(r){5-7} \cmidrule(r){9-11} \cmidrule(r){12-14}
		& \multicolumn{1}{c}{\textbf{AVG}} & \multicolumn{1}{c}{\textbf{MIN}} & \multicolumn{1}{c}{\textbf{MAX}} & \multicolumn{1}{c}{\textbf{AVG}} & \multicolumn{1}{c}{\textbf{MIN}} & \multicolumn{1}{c}{\textbf{MAX}} && \multicolumn{1}{c}{\textbf{AVG}} & \multicolumn{1}{c}{\textbf{MIN}} & \multicolumn{1}{c}{\textbf{MAX}} & \multicolumn{1}{c}{\textbf{AVG}} & \multicolumn{1}{c}{\textbf{MIN}} & \multicolumn{1}{c}{\textbf{MAX}} \\ \midrule

        \textbf{Wine Modest}  & 1.99E-07  & -9.55E-07 & 4.31E-06 & 1.74E-07  & -9.54E-07 & 3.99E-06 & \textbf{Insur Modest} & 1.05E-05  & 7.69E-07  & 4.21E-05 & 1.02E-05  & -7.09E-07 & 4.28E-05 \\
        \textbf{Wine Severe}  & 2.03E-07  & -1.79E-07 & 2.57E-06 & 2.64E-07  & -4.55E-07 & 2.79E-06 & \textbf{Insur Severe} & 2.43E-06  & -3.36E-07 & 1.01E-05 & 2.45E-06  & -3.27E-07 & 1.01E-05 \\
        \textbf{Build Modest} & -4.52E-06 & -1.25E-04 & 1.23E-04 & -1.25E-04 & -5.27E-04 & 5.01E-04 & \textbf{Blog Modest}  & -2.59E-09 & -5.66E-07 & 2.99E-07 & 1.92E-09  & -5.62E-07 & 3.03E-07 \\
        \textbf{Build Severe} & 2.65E-07  & -2.63E-05 & 2.54E-05 & 8.27E-06  & -8.02E-05 & 1.47E-04 & \textbf{Blog Severe}  & -2.06E-08 & -8.12E-07 & 3.24E-07 & -5.02E-09 & -7.81E-07 & 3.06E-07\\

		\bottomrule
			\label{tab:SPGLS-real-MSE}
	\end{tabular}
}
\end{table}%
\subsection{Synthetic Dataset}
\subsubsection{Dense Data}\label{ap:reldifrealdata}
Table \ref{tab:SPGLS-dense-reltol} summarises relative errors of objective values on synthetic datasets without sparsity. Comparing to the result in real-world dataset, we find that all the methods
have  high accuracy as the relative errors of the
MSEs are up to \texttt{4.60e-5}.
\begin{table}[H]
\setlength\tabcolsep{3pt}
\centering
\caption{Relative error on synthetic dataset without sparsity}
\resizebox{\linewidth}{!}{
\begin{tabular}{@{}lcccccclcccccc@{}}
	\toprule
	\multirow{2}{*}{${\gamma = 0.1}$}
	&\multicolumn{3}{c}{ $\bf{(f_\textbf{SOCP}-f_\textbf{LTRSR})/|f_\textbf{SOCP}|}$}
	&\multicolumn{3}{c}{ $\bf{(f_\textbf{SOCP}-f_\textbf{RTRNew})/|f_\textbf{SOCP}|}$}
	&\multicolumn{1}{c}{\multirow{2}{*}{${\gamma = 0.01}$}}
	&\multicolumn{3}{c}{ $\bf{(f_\textbf{SOCP}-f_\textbf{LTRSR})/|f_\textbf{SOCP}|}$}
	&\multicolumn{3}{c}{ $\bf{(f_\textbf{SOCP}-f_\textbf{RTRNew})/|f_\textbf{SOCP}|}$}
	\\
	\cmidrule(r){2-4} \cmidrule(r){5-7} \cmidrule(r){9-11} \cmidrule(r){12-14}
	& \multicolumn{1}{c}{\textbf{AVG}} & \multicolumn{1}{c}{\textbf{MIN}} & \multicolumn{1}{c}{\textbf{MAX}} & \multicolumn{1}{c}{\textbf{AVG}} & \multicolumn{1}{c}{\textbf{MIN}} & \multicolumn{1}{c}{\textbf{MAX}} && \multicolumn{1}{c}{\textbf{AVG}} & \multicolumn{1}{c}{\textbf{MIN}} & \multicolumn{1}{c}{\textbf{MAX}} & \multicolumn{1}{c}{\textbf{AVG}} & \multicolumn{1}{c}{\textbf{MIN}} & \multicolumn{1}{c}{\textbf{MAX}} \\ \midrule
	$m=2n$ &1.01E-09 &	9.58E-11 &	3.41E-09 &	1.01E-09 &	9.58E-11 &	3.41E-09 & $m=2n   $                  & 1.51E-05                         & 5.39E-08                         & 4.60E-05                         & 1.51E-05                         & 5.39E-08                         & 4.60E-05                         \\
	$m=n$ & 2.26E-08 &	3.37E-11  &	1.34E-07	& 2.26E-08 &3.36E-11 &	1.34E-07 & $m=n$                      & 2.11E-06                         & 3.26E-07                         & 7.40E-06                         & 2.11E-06                         & 3.26E-07                         & 7.40E-06
	\\
	$m=0.5n$ & 8.27E-09 &	2.54E-12 &	4.82E-08 &	8.27E-09 &	2.05E-12 &	4.82E-08 & $m=0.5n $                  & 2.21E-06                         & 3.44E-07                         & 6.87E-06                         & 2.21E-06                         & 3.44E-07                         & 6.87E-06\\
	\bottomrule
		\label{tab:SPGLS-dense-reltol}
\end{tabular}
}
\end{table}%
\subsubsection{Sparse Data}\label{ap:reldiffsyn}
Table \ref{tab:SPGLS-sparse-reltol} summarises relative error of objective values  in synthetic dataset with different sparsity. Similar to the previous cases, both of our methods have high accuracy in terms of MSEs. These consistent results further prove the validity of our SCLS reformulation.
\begin{table}[H]


\setlength\tabcolsep{3pt}
\centering
\caption{Relative error on synthetic dataset without sparsity}
\resizebox{\linewidth}{!}{ 	
\begin{tabular}{@{}lcccccclcccccc@{}}
	\toprule
	\multicolumn{14}{c}{sparsity = 0.0001}\\
	\midrule
	\multirow{2}{*}{${\gamma = 0.1}$}
	&\multicolumn{3}{c}{ $\bf{(f_\textbf{SOCP}-f_\textbf{LTRSR})/|f_\textbf{SOCP}|}$}
	&\multicolumn{3}{c}{ $\bf{(f_\textbf{SOCP}-f_\textbf{RTRNew})/|f_\textbf{SOCP}|}$}
	&\multicolumn{1}{c}{\multirow{2}{*}{${\gamma = 0.01}$}}
	&\multicolumn{3}{c}{ $\bf{(f_\textbf{SOCP}-f_\textbf{LTRSR})/|f_\textbf{SOCP}|}$}
	&\multicolumn{3}{c}{ $\bf{(f_\textbf{SOCP}-f_\textbf{RTRNew})/|f_\textbf{SOCP}|}$}
	\\
	\cmidrule(r){2-4} \cmidrule(r){5-7} \cmidrule(r){9-11} \cmidrule(r){12-14}
	& \multicolumn{1}{c}{\textbf{AVG}} & \multicolumn{1}{c}{\textbf{MIN}} & \multicolumn{1}{c}{\textbf{MAX}} & \multicolumn{1}{c}{\textbf{AVG}} & \multicolumn{1}{c}{\textbf{MIN}} & \multicolumn{1}{c}{\textbf{MAX}} && \multicolumn{1}{c}{\textbf{AVG}} & \multicolumn{1}{c}{\textbf{MIN}} & \multicolumn{1}{c}{\textbf{MAX}} & \multicolumn{1}{c}{\textbf{AVG}} & \multicolumn{1}{c}{\textbf{MIN}} & \multicolumn{1}{c}{\textbf{MAX}} \\
    \midrule
	$m =3n$  & 1.24E-09                         & 1.48E-11                         & 5.58E-09                         & 1.24E-09                         & 1.45E-11                         & 5.58E-09                          & $m =3n $ & 3.98E-09 & 5.27E-11 & 1.19E-08 & 3.97E-09 & 3.19E-11  & 1.19E-08 \\
	$m=2n $ & 7.49E-11                         & 3.53E-13                         & 1.55E-10                         & 7.47E-11                         & 3.20E-13                         & 1.54E-10                          &$m=2n $  & 2.25E-09 & 2.62E-12 & 4.88E-09 & 2.24E-09 & -3.51E-12 & 4.87E-09 \\
	$m=n $  & 9.10E-10                         & 9.93E-12                         & 2.01E-09                         & 9.10E-10                         & 9.73E-12                         & 2.01E-09                          &$m=n $   & 7.95E-09 & 1.83E-10 & 2.34E-08 & 7.94E-09 & 1.16E-10  & 2.34E-08 \\
	$m=0.5n$ & 3.06E-10                         & 3.81E-12                         & 6.60E-10                         & 3.06E-10                         & 3.70E-12                         & 6.60E-10                          & $m=0.5n$ & 3.07E-09 & 7.96E-13 & 6.28E-09 & 3.06E-09 & -4.52E-12 & 6.28E-09\\
	\midrule
	\multicolumn{14}{c}{sparsity = 0.001}\\
	\midrule
	\multirow{2}{*}{${\gamma = 0.1}$}
	&\multicolumn{3}{c}{ $\bf{(f_\textbf{SOCP}-f_\textbf{LTRSR})/|f_\textbf{SOCP}|}$}
	&\multicolumn{3}{c}{ $\bf{(f_\textbf{SOCP}-f_\textbf{RTRNew})/|f_\textbf{SOCP}|}$}
	&\multicolumn{1}{c}{\multirow{2}{*}{$\gamma = 0.01$}}
	&\multicolumn{3}{c}{ $\bf{(f_\textbf{SOCP}-f_\textbf{LTRSR})/|f_\textbf{SOCP}|}$}
	&\multicolumn{3}{c}{ $\bf{(f_\textbf{SOCP}-f_\textbf{RTRNew})/|f_\textbf{SOCP}|}$}
	\\
	\cmidrule(r){2-4} \cmidrule(r){5-7} \cmidrule(r){9-11} \cmidrule(r){12-14}
	& \multicolumn{1}{c}{\textbf{AVG}} & \multicolumn{1}{c}{\textbf{MIN}} & \multicolumn{1}{c}{\textbf{MAX}} & \multicolumn{1}{c}{\textbf{AVG}} & \multicolumn{1}{c}{\textbf{MIN}} & \multicolumn{1}{c}{\textbf{MAX}} && \multicolumn{1}{c}{\textbf{AVG}} & \multicolumn{1}{c}{\textbf{MIN}} & \multicolumn{1}{c}{\textbf{MAX}} & \multicolumn{1}{c}{\textbf{AVG}} & \multicolumn{1}{c}{\textbf{MIN}} & \multicolumn{1}{c}{\textbf{MAX}} \\
    \midrule
  $m=3n $ & 1.36E-09                         & 5.73E-11                         & 2.58E-09                         & 1.36E-09                         & 5.64E-11                         & 2.58E-09                          &$m =3n$  & 1.92E-08 & 5.04E-10 & 8.89E-08 & 1.92E-08 & 5.04E-10 & 8.89E-08 \\
   $m=2n $  & 1.66E-09                         & 6.00E-11                         & 5.06E-09                         & 1.66E-09                         & 5.91E-11                         & 5.06E-09                          & $m=2n $  & 5.14E-08 & 9.80E-12 & 2.05E-07 & 5.14E-08 & 9.74E-12 & 2.05E-07 \\
  $ m=n  $  & 5.76E-10                         & 1.53E-09                         & 1.13E-12                         & 6.19E-09                         & 1.53E-09                         & -2.03E-13                         & $m=n  $  & 1.96E-08 & 1.14E-10 & 4.83E-08 & 1.96E-08 & 1.14E-10 & 4.83E-08 \\
   $ m=0.5n$ & 1.90E-09                         & 1.02E-10                         & 6.68E-09                         & 1.90E-09                         & 1.01E-10                         & 6.68E-09                          & $m=0.5n $& 4.42E-08 & 4.91E-09 & 1.34E-07 & 4.42E-08 & 4.91E-09 & 1.34E-07  \\  	
	\midrule
	\multicolumn{14}{c}{sparsity = 0.01}\\
	\midrule
	\multirow{2}{*}{${\gamma = 0.1}$}
	&\multicolumn{3}{c}{ $\bf{(f_\textbf{SOCP}-f_\textbf{LTRSR})/|f_\textbf{SOCP}|}$}
	&\multicolumn{3}{c}{ $\bf{(f_\textbf{SOCP}-f_\textbf{RTRNew})/|f_\textbf{SOCP}|}$}
	&\multicolumn{1}{c}{  \multirow{2}{*}{${\gamma = 0.01}$}}
	&\multicolumn{3}{c}{ $\bf{(f_\textbf{SOCP}-f_\textbf{LTRSR})/|f_\textbf{SOCP}|}$}
	&\multicolumn{3}{c}{ $\bf{(f_\textbf{SOCP}-f_\textbf{RTRNew})/|f_\textbf{SOCP}|}$}
	\\
	\cmidrule(r){2-4} \cmidrule(r){5-7} \cmidrule(r){9-11} \cmidrule(r){12-14}
	& \multicolumn{1}{c}{\textbf{AVG}} & \multicolumn{1}{c}{\textbf{MIN}} & \multicolumn{1}{c}{\textbf{MAX}} & \multicolumn{1}{c}{\textbf{AVG}} & \multicolumn{1}{c}{\textbf{MIN}} & \multicolumn{1}{c}{\textbf{MAX}} && \multicolumn{1}{c}{\textbf{AVG}} & \multicolumn{1}{c}{\textbf{MIN}} & \multicolumn{1}{c}{\textbf{MAX}} & \multicolumn{1}{c}{\textbf{AVG}} & \multicolumn{1}{c}{\textbf{MIN}} & \multicolumn{1}{c}{\textbf{MAX}} \\
    \midrule
        $m =3n$  & 2.19E-08                         & 2.64E-11                         & 1.70E-08                         & 2.19E-08                         & 2.63E-11                         & 1.70E-08                          & $m =3n$  & 1.05E-07 & 2.15E-09 & 2.51E-07 & 1.05E-07 & 2.15E-09 & 2.51E-07 \\
    $m=2n$   & 8.23E-08                         & 8.23E-08                         & 3.74E-07                         & 1.02E-07                         & 4.80E-11                         & 3.74E-07                          & $m=2n$  & 6.94E-07 & 3.99E-10 & 3.10E-06 & 6.94E-07 & 3.99E-10 & 3.10E-06 \\
    $m=n$    & 1.27E-08                         & 9.14E-10                         & 3.37E-08                         & 1.27E-08                         & 9.14E-10                         & 3.37E-08                          &$m=n $   & 2.21E-06 & 7.78E-09 & 1.00E-05 & 2.21E-06 & 7.78E-09 & 1.00E-05 \\
   $ m=0.5n $& 1.90E-08                         & 3.52E-12                         & 4.50E-08                         & 1.90E-08                         & 3.45E-12                         & 4.50E-08                          & $m=0.5n$ & 6.52E-06 & 6.40E-08 & 2.95E-05 & 6.52E-06 & 6.40E-08 & 2.95E-05\\
	\bottomrule
		\label{tab:SPGLS-sparse-reltol}
\end{tabular}
}
\end{table}%

\end{document}